\def\EquationsBySection{\def\theequation
{\thesection.\arabic{equation}}%
\@addtoreset{equation}{section}}
\newcommand\old[1]{}
\newcommand{\pend}
{\hfill \thicklines \framebox(6.6,6.6)[l]{}}
\renewenvironment{proof}{\noindent {\it  Proof.} \rm}
{\pend}
\newtheorem{theorem}{Theorem}[section]
\newtheorem{definition}{Definition}[section]
\journal{Science China Mathematics}
\begin{document}

\begin{frontmatter}



\title{Center and isochronous center of a class of
quasi-analytic switching systems} 


\author{Feng Li$^a$,\, Pei Yu$^{b,}$\footnote{Corresponding author. Tel: +1 519661-2111;
E-mail address: pyu@uwo.ca}, Yirong Liu$^c$, Yuanyuan Liu$^a$}

\address{
\small\it $^a$School of Science, Linyi University, Linyi, Shandong 276005,
P.R. China \vspace{0.0cm} \\[1.0ex] 
\small\it $^b$Department of Applied Mathematics, Western University, \\
London, Ontario N6A 5B7, Canada \\[1.0ex] 
\small\it $^c$School of Mathematics and Statistics, Central South University, \\
Changsha, Hunan 410012, P.R. China \vspace*{-0.20in} }

\begin{abstract}
In this paper, we study the integrability and linearization of a class
of quadratic quasi-analytic 
switching systems. We improve an existing
method to compute the focus values and periodic constants of
quasi-analytic switching systems. In particular, with our 
method, we demonstrate that the dynamical behaviors of quasi-analytic
switching systems are more complex than that of continuous quasi-analytic
systems, by showing the existence of six and seven limit 
cycles in the neighborhood of the origin and infinity, respectively, 
in a quadratic quasi-analytic switching system.
Moreover, explicit conditions are obtained for classifying the
centers and isochronous centers of the system.
\end{abstract}

\begin{keyword}
Quasi-analytic switching systems, Lyapunov constant,
limit cycle, center, isochronous center.

\vspace{0.10in} 
{\bf MSC(2010)} 34C07, 34C23
\end{keyword}

\end{frontmatter}



\section{Introduction}

The problem of characterizing the centers and isochronous centers of dynamical
systems has attracted the attention
of many researchers. So far, regarding the family of polynomial differential 
systems, a complete classification of centers and isochronous centers has 
only been solved for quadratic polynomial systems, or simply quadratic 
systems. Quadratic systems having a center were classified by 
Dulac \cite{Dulac1908}, Kapteyn \cite{Kapteyn1911,Kapteyn1912},
Bautin \cite{Bautin1952}, \.Zo{\l}\c{a}dek \cite{Zoladek1994}, 
Yu and Han \cite{YuHan2012}, while quadratic 
systems having an isochronous center were characterized by 
Loud \cite{Loud1964}. The centers of the cubic systems with homogeneous 
nonlinearities were studied in \cite{Vulpe1988,Loud1999},
and the isochronous centers for such cubic systems were further 
investigated by Pleshkan \cite{Pleshkan1969}. However it is still 
far away from obtaining a complete classification of the centers and 
isochronous centers for polynomial differential systems of degree three,
and it is extremely difficultly to study these problems when the degree 
of the systems is increased. For example, consider the following systems,
\begin{eqnarray}\label{Eq1.1}
\hspace{-0.60in}&& \dot{z}\!=\!(\lambda \!+\! i)z
\!+\! (\! z\bar{z})^{\frac{d-5}{2}} \! (Az^{4+j}\bar{z}^{1-j}
\!+\!\! Bz^3\bar{z}^2 \!+\! Cz^{2-\! j}\bar{z}^{3+j} \!+\!\! D\bar{z}^5\!), 
\, d \!=\! 2m \!+\! 1 \! \ge \!\! 5,
\\[0.5ex]
\label{Eq1.2}
\hspace{-0.60in}&& \dot{z} \!=\! iz+(z\bar{z})^{\frac{d-4}{2}}(Az^3\bar{z}
+Bz^2\bar{z}^2+C\bar{z}^4), \hspace{1.0in} d=2m \geq 4,
\\[0.5ex]
\label{Eq1.3}
\hspace{-0.60in}&& \dot{z} \!=\! (\lambda+i)z+(z\bar{z})^{\frac{d-3}{2}}(Az^3
+Bz^2\bar{z}+Cz\bar{z}^2+D\bar{z}^3), \quad d=2m+1\geq3,
\\[0.5ex]
\label{Eq1.4}
\hspace{-0.60in}&& \dot{z} \!=\! (\lambda+i)z+(z\bar{z})^{\frac{d-2}{2}}
(Az^2+Bz\bar{z}+C\bar{z}^2), \hspace{0.80in} d=2m \geq 2,
\end{eqnarray}
which have been investigated by Llibre and Valls
\cite{LV2009a,LV2009b,LV2009c,LV2009d},
and the conditions on centers and isochronous centers were obtained.
However, in these articles, the parameter $d$ was restricted to
make the systems be polynomial systems.

Recently, switching systems have been widely used in modelling many 
practical problems in science and engineering. Theory suggests that 
switching systems can be considered as a uniform limit of continuous systems, 
and that the global dynamics of continuous models may be approximated 
by switching systems. In fact, the richness of dynamical behavior found in
switching systems covers almost all the phenomena discussed in general 
continuous systems, such as limit cycles, homoclinic and heteroclinic orbits, 
strange attractors.  For example, Leine and Nijmeijer \cite{Leine2004}, 
and Zou and Kapper \cite{Zou2006} considered non-smooth Hopf bifurcation 
in switching systems. Bifurcation of limit cycles from centers of 
discontinuous quadratic systems was studied by 
Chen and Du \cite{Chen2010}. Limit cycles in a class of 
continuous and discontinuous cubic polynomial differential systems
were investigated by Llibre {\it et al.} \cite{Llibre2014}.
Bifurcation of limit cycles in discontinuous quadratic differential systems 
with two zones was considered in \cite{LM2014}.
The Melnikov function method has also been extended to study homoclinic 
bifurcation of non-smooth systems \cite{Du2005,Kukuka2007}. 
In addition, some general efficient methods have also been developed 
to study non-smooth 
systems. Among these methods, normal form computation for impact 
oscillators was given in \cite{Friederiksson2000}, and a general methodology
for reducing multidimensional flows to low dimensional maps in piecewise
nonlinear oscillators was proposed in \cite{Pavlovskaia2007}. 
Center and isochronous center conditions for switching systems associated 
with elementary singular points were discussed in \cite{Li2015}.

More recently, quasi-analytic systems have also been widely used in modelling
many practical problems. By ``quasi-analytic'', we mean that 
the system may be analytic for some parameters but not for some other 
parameters. For example, an axis-symmetric quasi-analytical model was 
developed in order to simulate the behavior of a RFEC system during its 
operation \cite{Musolino2012}. A quasi-analytical model  for scattering 
infrared near-field microscopy has been designed for predicting and 
analyzing signals on layered samples \cite{Hauer2012}.
A simple quasi-analytical model was developed in \cite{Oerlemans1999} 
to study the response of ice-sheets to climate. 
On the other hand, a general 
type of quasi-analytic systems, described by \vspace{-0.05in} 
\begin{equation}\label{Eq1.5}
\begin{array}{l}
\dot{x}=\delta x-y+ \displaystyle\sum_{k=2}^\infty
(x^2+y^2)^{\frac{(k-1)(\lambda-1)}{2}}X_k(x,y), \\[0.5ex]
\dot{y}=x+\delta y+ \displaystyle\sum_{k=2}^\infty
(x^2+y^2)^{\frac{(k-1)(\lambda-1)}{2}}Y_k(x,y),
\end{array}
\end{equation}

\vspace{-0.05in} 
\noindent 
where
$$
X_k(x,y)= \!\!\! \sum_{\alpha+\beta=k}A_{\alpha\beta}x^\alpha y^\beta, \qquad
Y_k(x,y)= \!\!\! \sum_{\alpha+\beta=k}B_{\alpha\beta}x^\alpha y^\beta,
$$
has been studied by Liu  \cite{Liu2002} and Liu {\it et al.} \cite{LLH2009}. 
As special cases, quadratic quasi-analytic 
systems have been studied in \cite{LL2012} and 
cubic quasi-analytic systems in \cite{P2005}.
In particular, generalized focal values and bifurcation of limit cycles 
for quadratic quasi-analytic systems were discussed in \cite{LLH2009}. 
Here, a quadratic quasi-analytic system is defined by 
taking $k=2$ only in \eqref{Eq1.5}. Similarly, cubit, etc. quasi-analytic 
systems can be defined.  

Similar to quasi-analytic continuous systems, in this paper, we propose 
to study the center and isochronous center conditions for the following class
of discontinuous planar systems: 
\begin{subequations}\label{Eq1.6}
\begin{align}
\begin{array}{l}
\dot{x}=\delta x-y+ \displaystyle\sum_{k=2}^\infty
(x^2+y^2)^{\frac{(k-1)(\lambda-1)}{2}}F_k^+(x,y), \\
\vspace{1mm}
\dot{y}=x+\delta y+ \displaystyle\sum_{k=2}^\infty
(x^2+y^2)^{\frac{(k-1)(\lambda-1)}{2}}G_k^+(x,y),
\end{array} \quad (y>0), \label{Eq1.6a} \\[-1.0ex] 
\begin{array}{l}
\dot{x}= \delta x-y+ \displaystyle\sum_{k=2}^\infty
(x^2+y^2)^{\frac{(k-1)(\lambda-1)}{2}}F_k^-(x,y), \\
\dot{y}=x+\delta y+ \displaystyle\sum_{k=2}^\infty
(x^2+y^2)^{\frac{(k-1)(\lambda-1)}{2}}G_k^-(x,y),
\end{array} \quad (y<0), \label{Eq1.6b} 
\end{align}
\end{subequations}
for $(x,y) \in {\rm R}^2$, 
where the two subsystems \eqref{Eq1.6a} 
and \eqref{Eq1.6b} describe 
dynamics on the upper and lower half planes, called 
the upper and lower systems, respectively.  
For $\lambda>0$ ($< 0$), the linear terms in \eqref{Eq1.6} are the lowest 
(highest) order terms in these functions. Hence, when $\lambda>0$,
the origin of \eqref{Eq1.6} is a center or a focus. When $\lambda<0$, 
system \eqref{Eq1.6} has no real singular point in the equator of 
Poincar\'{e} compactification, but the point at infinity is a center 
or a focus. Therefore, it is necessary to determine, for $\lambda\neq 0$,
whether or not the origin (when $\lambda >0$) or infinity 
(when $\lambda < 0$) is a center (or a weak focus ).
As a continuous work of \cite{Li2015} and \cite{Liu2002,LLH2009}, we 
generalize the study to consider center and isochronous center conditions 
of quasi-analytic switching systems and hope to promote the research 
in this direction. 

As compared with bifurcations, the center and isochronous center problems 
of switching systems have not received much attention. They should be 
considered carefully for switching systems because they are closely related 
to conditions on integrability and linearization.
Freire {\em et al.} \cite{Freire2005} discussed the center problem in a 
piecewise linear system. But it is much hard to solve center or isochronous 
center problem for nonlinear or piecewise linear systems because  the 
classical methods for computing lyapunov constants and periodic
constants are no longer applicable. Thus, new techniques are needed to 
develop.

In this paper, we study quasi-analytic switching systems mainly from 
two aspects. First of all, we modify and improve existing methods to 
compute Lyapunov constants and periodic constants for quasi-analytic 
switching systems. Secondly, as an application, we study a quadratic 
quasi-analytic switching system and derive its center and isochronous center 
conditions.

The rest of the paper is organized as follows.  In Section 2, 
we present a method to compute the return map of system \eqref{Eq1.6}. 
As an application, a class of quadratic quasi-analytic
switching systems is studied in Section 3, and the center and 
isochronous center are classified by using our method. Finally, 
our conclusion is drawn in Section 4.

\section{Lyapunov constants of the quasi-analytic 
switching system \eqref{Eq1.6}}

Under the transformation of the polar coordinates,
\begin{equation}\label{Eq2.1}
x=r^{\frac{1}{\lambda}}\cos\theta,\quad y=r^{\frac{1}{\lambda}}\sin\theta,
\end{equation}
system \eqref{Eq1.5} becomes 

\begin{equation}\label{Eq2.2}
\begin{split}
\dot{r}&=\lambda r \Big(\delta+\sum_{k=1}^\infty
\varphi_{k+2}(\theta)r^k \Big),\\
\dot{\theta}&=1+\sum_{k=1}^\infty \psi_{k+2}(\theta)r^k,
\end{split} 
\qquad (r \ge 0)
\end{equation}
where $\varphi_{k}(\theta)$ and $\psi_{k}(\theta)$ are polynomial
functions in $\cos\theta$ and $\sin\theta$, given in the form of
$$
\varphi_k(\theta)=\cos\theta X_{k-1}(\cos\theta, \sin\theta)
+\sin\theta Y_{k-1}(\cos\theta, \sin\theta),
$$
$$\psi_k(\theta)= \cos\theta Y_{k-1}(\cos\theta, \sin\theta)
-\sin\theta X_{k-1}(\cos\theta, \sin\theta).
$$
Then, it follows from system \eqref{Eq2.2} that
\begin{equation}\label{Eq2.3}
\frac{dr}{d\theta}=\lambda \, r\, \frac{\delta+\Sigma_{k=1}^\infty
\varphi_{k+2}(\theta)r^k}{1+\Sigma_{k=1}^\infty \psi_{k+2}(\theta)r^k}.
\end{equation}
Obviously, the polar coordinate form of the quasi-analytic system 
\eqref{Eq1.5} differs from
 that of analytic systems by only a constant factor $\lambda$.
It is easy to see that \eqref{Eq2.3} is a special case of the
following equation,
\begin{equation}\label{Eq2.4}
\frac{dr}{d\theta}= r\, \sum_{k=1}^\infty R_k(\theta)r^k, \quad (r \ge 0).
\end{equation}

By the method of small parameters of Poincar\'{e},
the general solution of \eqref{Eq2.3} can be expressed as \cite{Amelikin1982}
$$
r=\tilde{r}(\theta,h)=\sum_{k=1}^\infty v_k(\theta)h^k,
$$
where $v_1(0)=1,\, v_{k}(0)=0, \forall k\geq 2$.
Now, substituting the above solution $r=\tilde{r}(\theta,h)$
into \eqref{Eq2.4} yields
\begin{equation}\label{Eq2.5}
\begin{split}
v'_1(\theta)&=R_0(\theta)v_1(\theta),\\
v'_2(\theta)&=R_0(\theta)v_2(\theta)+R_1(\theta)v_1(\theta)^2,\\
&\vdots \\
v'_m(\theta)&=R_0(\theta)\Omega_{1,m}(\theta)+R_1(\theta)
\Omega_{2,m}(\theta)+\cdots+R_{m-1}(\theta)\Omega_{m,m}(\theta).
\end{split}
\end{equation}
Thus, we may solve $v_{k}(\theta)$ one by one to obtain
\begin{equation}\label{Eq2.6}
\begin{split}
v_1(\theta)&=e^{\oint_0^\vartheta R_0(\varphi)d\varphi},\\
v_2(\theta)&=2v_1(\theta) \oint_0^\vartheta R_1(\varphi)v_1(\varphi)d\varphi,\\
&\vdots \\
v_m(\theta)&=v_1(\theta) \oint_0^\vartheta \frac{R_1(\varphi)
\Omega_{2,m}(\varphi)+\cdots
+R_{m-1}(\varphi)\Omega_{m,m}(\varphi)}{v_1(\varphi)}\, d\varphi.
\end{split}
\end{equation}

Note that $R_0(\theta)=\lambda \delta$ for system \eqref{Eq2.2}.
Further, we define the successive function as
$$
\Delta(h)=\tilde{r}(2\pi,h)-h,
$$
and thus the critical point being a center must satisfy $\Delta(h)=0$, 
namely,
$$
\tilde{r}(2\pi,h)=h.
$$
Many methods have been developed to compute the successive function
$\Delta (h)$ (e.g.,see \cite{LLH2009}).

From the second equation of \eqref{Eq2.2}, we can also obtain
\begin{equation}\label{Eq2.7}
t=T(\theta,h)=\int_0^\theta \frac{d\vartheta}
{1+\sum_{k=1}^\infty \psi_{k+2}(\theta)\tilde{r}(\theta,h)^k},
\end{equation}
which implies that the critical point being an isochronous center 
should satisfy
$$
\tilde{r}(2\pi,h)= h \quad {\rm and} \quad T(2\pi,h)= 2\pi.
$$

However, the classical methods and formulas cannot
be directly applied to a non-analytic switching system
due to discontinuity. We need to modify the existing methods
to resolve this problem. 
Similar to the return map defined for analytic switching systems
\cite{GT2003}, the approach used in \cite{GT2003} (see Lemma 2.1) can be 
extended to define the return maps for the 
quasi-analytic switching system \eqref{Eq1.6}. The basic idea is briefly
illustrated as follows, see Figure~\ref{fig1}. First of all, we define 
the positive half-return map of the upper phase
of system (\ref{Eq1.6a}). Then, by a transformation $y\rightarrow -y$, 
the lower half phase could be transformed into the upper phase, 
as shown in Figure~\ref{fig2}. Further, using a time reverse changing, the 
computation of this transformed half-return map of the lower phase is 
replaced by computing the positive half-return map of the following system,
\begin{equation}\label{Eq2.8}
\begin{array}{l}
\dot{x}= \delta x-y- \displaystyle\sum_{k=2}^\infty
(x^2+y^2)^{\frac{(k-1)(\lambda-1)}{2}}F_k^-(x,-y), \\
\dot{y}=x+\delta y+ \displaystyle\sum_{k=2}^\infty
(x^2+y^2)^{\frac{(k-1)(\lambda-1)}{2}}G_k^-(x,-y),
\end{array} \quad (y>0),
\end{equation}
which is shown in Figure~\ref{fig3}. Therefore, we only need to compute 
the two positive half-return maps
for systems (\ref{Eq1.6a}) and (\ref{Eq2.8}).
\begin{figure}[!h]
\begin{center}
\vspace{2mm}
\includegraphics[height=4cm]{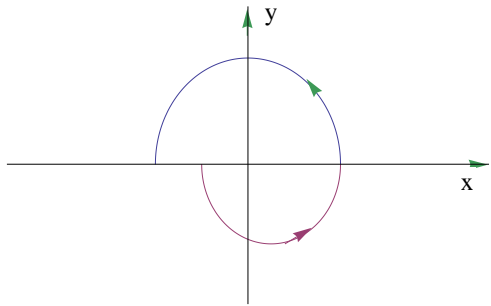}
\caption{Half-return maps for system (\ref{Eq1.6a}) and (\ref{Eq1.6b}).}
\label{fig1}
\end{center}
\end{figure}
\begin{figure}[!h]
\begin{center}
\vspace{-0.20in}
\includegraphics[height=4cm]{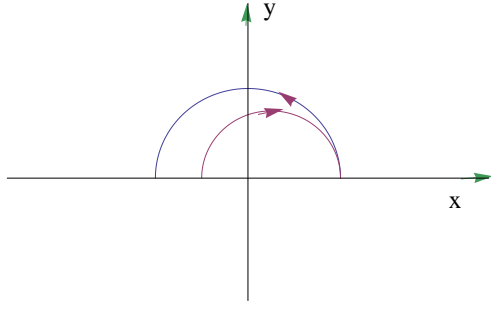}
\caption{The lower-half plane changed to the upper-half plane.}
\label{fig2}
\end{center}
\end{figure}
\begin{figure}[!h]
\begin{center}
\vspace{2mm}
\includegraphics[height=4cm]{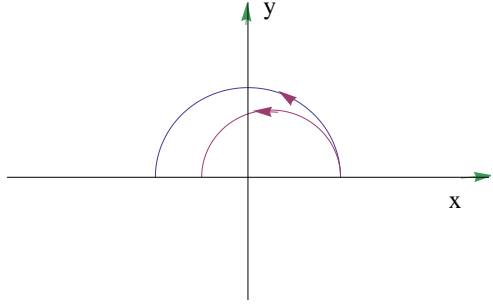}
\caption{Vector fields of systems (\ref{Eq1.6a}) and (\ref{Eq2.8})}
\label{fig3}
\end{center} 
\vspace*{-0.10in} 
\end{figure}
By defining the successive functions for systems (\ref{Eq1.6a})
and (\ref{Eq2.8}), respectively, as
$$
\Delta_1(h)=\tilde{r}_1(\pi,h)-h
\quad {\rm and} \quad
\Delta_2(h)=\tilde{r}_2(\pi,h)-h,
$$
then we obtain the successive function for the switching
system \eqref{Eq1.6}, defined as
\begin{equation}\label{Eq2.9}
\Delta(h)=\Delta_1(h)-\Delta_2(h)=\tilde{r}_1(\pi,h)-\tilde{r}_2(\pi,h).
\end{equation}

Similarly, the period constants for systems (\ref{Eq1.6a})
and (\ref{Eq2.8}) can be defined as
$$
\begin{array}{l}
T_1(\theta,h)= \displaystyle\int_0^\pi \displaystyle\frac{d\vartheta}
{1+\sum_{k=1}^\infty \psi_{2+k}(\theta)\tilde{r_1}^k(\vartheta,h)}, \\[3.0ex]
T_2(\theta,h)= \displaystyle\int_0^\pi \frac{d\vartheta}
{1+\sum_{k=1}^\infty \psi_{2+k}(\theta)\tilde{r_2}^k(\vartheta,h)},
\end{array}
$$
which in turn yield the period function for the switching
system \eqref{Eq1.6} in the form of
\begin{equation}\label{Eq2.10}
T=T_1(\pi,h)+T_2(\pi,h)=2\pi+\sum_{k=1}^{\infty} T_k h^k.
\end{equation}

In particular, if the equations describing the lower half plane are given by
\begin{equation}\label{Eq2.11}
\begin{array}{l}
\dot{x}= \delta x -y+ \displaystyle\sum_{k=2}^\infty
(x^2+y^2)^{\frac{(k-1)(\lambda-1)}{2}} F_k^-(x,y)=-y, \\
\dot{y}=x+\delta y+ \displaystyle\sum_{k=2}^\infty (x^2+y^2)^{\frac{(k-1)
(\lambda-1)}{2}} G_k^-(x,y)=x.
\end{array}
\end{equation}
then we only need to compute $\Delta_1(h)$ and $T_1(\theta,h)$.

Based on the above results, we can define the focus values and
periodic constants for the quasi-analytic switching system \eqref{Eq1.6}.

\begin{definition}
$\Delta(h)$ can be written as
$$
\Delta(h)=\sum_1^n [ u_1(\pi)-v_1(\pi) ] h^k=\sum_{k=1}^{\infty} 
V_k h^k,
$$
where $V_k$ is called the $k$th-order focus value  at the origin 
(or infinity) of the quasi-analytic switching system \eqref{Eq1.6}.
\end{definition}

\begin{definition}
$T(h)$ can be expressed as
$$
T(h)=T_1(\pi,h)+T_2(\pi,h)=2\pi+\sum_{k=1}^{\infty} T_k h^k,
$$
where $T_k$ is called the $k$th periodic constant at the origin (or infinity) 
of the quasi-analytic switching system \eqref{Eq1.6}.
\end{definition}

Having defined $V_k$ and $T_k$, we now describe the steps in computing them.
\begin{enumerate}[(1)]
\item
Introduce the transformations: $y\rightarrow -y$ and $t\rightarrow -t$
for the lower half plane.

\item
By using the transformation of polar coordinates,
$$
x=r^{\frac{1}{\lambda}}\cos\theta,\quad y=r^{\frac{1}{\lambda}}\sin\theta,
$$
in the systems (\ref{Eq1.6a}) and (\ref{Eq2.8}), write the solutions 
for the systems (2.8) and (2.10) as
$$
r_1=\tilde{r}_1(\theta,h)=\Sigma_{k=1}^\infty u_k(\theta)h^k
\quad {\rm and} \quad
r_2=\tilde{r}_2(\theta,h)=\Sigma_{k=1}^\infty v_k(\theta)h^k,
$$
respectively, satisfying $u_1(0)=v_1(0)=1,\ u_{k}(0)=v_{k}(0)=0, \
\forall k\geq 2$.

\item
Solve $u_{k}(\theta)$ and $v_{k}(\theta)$.

\item
Compute the successive function for the switching system using the formula,
$$
\Delta(h)=\Delta_1(h)-\Delta_2(h)=\tilde{r}_1(\pi,h)-\tilde{r}_2(\pi,h).
$$

\item
Compute the periodic constants for the switching system using the formula,
$$
T=T_1(\pi,h)+T_2(\pi,h).
$$
\end{enumerate}

Obviously, the symmetry principle for continuous systems is no longer
applicable for switching systems.
We need to redefine symmetry for switching systems
in order to derive the center conditions of switching systems.

\begin{definition}
If both systems (\ref{Eq1.6a}) and  (\ref{Eq1.6b})
are symmetric with respect to the $y$-axis,
then system \eqref{Eq1.6} is said to be symmetric with respect to the $y$-axis.
Further, if the vector fields of systems (\ref{Eq1.6a}) and
(\ref{Eq1.6b}) satisfy
$$
F_k^+(x,y)=-F_k^-(x,-y) \quad {\rm and} \quad G_k^+(x,y)= G_k^-(x,-y),
$$
then system \eqref{Eq1.6} is said to be symmetric with respect to the $x$-axis.
\end{definition}
So obviously, if system \eqref{Eq1.6} is symmetric with respect to 
the $x$-axis or the $y$-axis, then the origin of system \eqref{Eq1.6} 
is a center.





\section{A quadratic quasi-analytic switching system}

In this section, we consider a quadratic quasi-analytic switching system
to demonstrate the application of the formulae and results obtained in the
previous section. We will use our method to determine the center conditions
and isochronous center conditions of the system we will consider, given by 
\begin{equation}\label{Eq3.1}
\begin{array}{ll}
\begin{array}{l}
\dot{x}= \delta x -y 
+(x^2+y^2)^{\frac{(\lambda-1)}{2}}(a_{20}x^2+a_{11}xy+a_{02}y^2), 
\\[0.5ex]
\dot{y}=x + \delta y 
+(x^2+y^2)^{\frac{(\lambda-1)}{2}}(b_{20}x^2+b_{11}xy+b_{02}y^2),
\end{array} &  (y>0),\\ [3.5ex]
\begin{array}{l}
\dot{x}= \delta x -y, \\
\dot{y}=x + \delta y ,
\end{array} & (y<0).
\end{array}
\end{equation}
The case $\lambda=1$ (a polynomial system) has been studied 
in \cite{GT2003}, which becomes a special quadratic switching system.
It is shown in \cite{GT2003} that the highest order of focus values 
for this special system is $5$, and $5$ small-amplitude limit cycles 
are obtained.  
We want to extend the study to the case $\lambda \ne 1$. 
However, when the lower system is not in a simple form, 
even for general quadratic switching systems,  
it is very difficult to determine the center conditions
and isochronous center conditions. Thus, in this paper we 
focus on the study of system \eqref{Eq3.1} for $\lambda \ne 1$.

\subsection{Center conditions and limit cycles for system \eqref{Eq3.1}}

We first study the center conditions and bifurcation of limit cycles 
in system (\ref{Eq3.1}). It has been recently noticed that Tian and Yu 
studied a quadratic switching Bautin system and obtained $10$ 
small-amplitude limit cycles \cite{Yu2015}. Here, we want to show 
that system  \eqref{Eq3.1} with $ \lambda \ne 1$ can bifurcate $7$ 
limit cycles around the origin, 
two more than that of the system with $\lambda =1$. 

In order to consider the center and isochronous center conditions, 
and determine the number of limit cycles bifurcating in the small
neighborhood of the origin (or infinity), we need to compute the 
Lyapunov constants and periodic constants.  
With the aid of a computer algebra system --
Mathematica, we obtain the following Lyapunov constants of system 
\eqref{Eq3.1}.

The first three Lyapunov constants at the
origin are given by
\begin{equation}\label{Eq3.2}
\begin{split} 
L_0 & = 2 \pi \delta, \\ 
L_1&=-\frac{2}{3}(a_{11} + 2 b_{02} + b_{20})\lambda,\\
L_2&=-\frac{\pi}{8} \big[ b_{20} (a_{20} + a_{02}) 
+ ( 2 a_{20} + b_{11}) ( b_{20} + b_{02}) \big] \lambda .
\end{split}\end{equation} 
For higher Lyapunov constants, we have two cases. 

\vspace*{0.10in} 
\noindent
{\bf Case (A)} $b_{20}\neq 0$. For this case, $L_3$ is given by 
\begin{equation*}
\begin{split}
L_3 =&-\frac{2}{105} \lambda \big\{
\big[ 6 a_{20}b_{02} ( 2 a_{20}+ b_{11}) 
+ b_{20} ( 3 a_{20}^2 + 4 b_{02}^2 ) \big] (\lambda+6)  \\ 
&\qquad \qquad + 14 b_{20} \big[ 3 a_{20} ( 2 a_{20}+b_{11}) 
+ 2 b_{20} b_{02} \big] \big\}.
\end{split}
\end{equation*}
Then, there are three sub-cases in computing $L_i, \, i\ge 4$.

\vspace*{0.10in} 
\noindent 
{\emph{\bfseries Case}} {\bf (A1)} \ 
$a_{20} [  7 b_{20} + b_{02} (\lambda+6) ] \neq 0$, 
for which we have 
\begin{equation*}
\begin{split}
L_4=& \ \frac{a_{20} b_{20} \lambda \, \pi}{1536 
[ 7 b_{20} + b_{02} (\lambda+6)]^2} \big[4 b_{20}+ b_{02}(\lambda+3) \big] 
\big[ 12 b_{20} + b_{02} ( \lambda + 9) \big] \\
&\times \! \big\{ 3 a_{20}^2 ( \lambda+6)^2 
- 28 b_{20} \big[ 7 b_{20} + b_{02} (\lambda+6) \big] \big\}. 
\end{split}
\end{equation*}
\noindent 
{\bf (a)} \ If $b_{20} = -\frac{1}{4}b_{02} (\lambda+3)$, then
\begin{equation*}
\begin{split}
L_5&=-\,\frac{b_{02}\, \lambda (\lambda+3) }{3243240
(\lambda-1)^2}\, f_1,\\
L_6&=-\,\frac{\pi\, b_{02}\, \lambda^3 (\lambda+3)^2 }{5308416 a_{20}
(\lambda-1)^3}\, f_2,\\
L_7&=-\,\frac{b_{02}\, \lambda (\lambda+3)}{6788231049600 a_{20}^2
(\lambda-1)^4}\, f_3,
\end{split}\end{equation*} 
where
\begin{equation*}
\begin{split}
f_1=&\ 
(3868 a_{20}^4 \!-\! 2909 a_{20}^2 b_{02}^2 \!-\! 1085 b_{02}^4 ) \lambda^5 
\!+\! (34320 a_{20}^4 \!-\! 5962 a_{20}^2 b_{02}^2 \!-\! 
1085 b_{02}^4 ) \lambda^4  \\ 
& + 2 (41274 a_{20}^4 + 3162 a_{20}^2 b_{02}^2 + 3563 b_{02}^4) \lambda^3 
- 6 b_{02}^2 (2937 a_{20}^2 + 259 b_{02}^2) \lambda^2 \\ 
& + 243 b_{02}^2 (83 a_{20}^2 - 35 b_{02}^2) \lambda +5103 b_{02}^4 ,\\
f_2=&\ \big[ (4 a_{20}^2 -7 b_{02}^2) \lambda^2 
+ 2 (24 a_{20}^2 - 7 b_{02}^2 ) \lambda + 3( 48 a_{20}^2 + 7 b_{02}^2) 
\big]\\
& \times \! \big[ 
(196 a_{20}^4 \!+\! 25 a_{20}^2 b_{02}^2 \!+\! 4 b_{02}^4 ) \lambda^2 
\!-\! 2 b_{02}^2 (53 a_{20}^2 \!+\! 4 b_{02}^2) \lambda 
\!+\! 81 a_{20}^2 b_{02}^2 \!+\! 4 b_{02}^4 \big],\\
f_3=& \ 56 b_{02}^8 (\lambda-1)^4 (\lambda+3)
( 1027505 \lambda^4 + 426036 \lambda^3 
- 1498554 \lambda^2 \\ 
& \hspace{1.5in} +551124  \lambda +1476225 ) \\ 
&  +128 a_{20}^8 \lambda^5 ( 
9756302 \lambda^4  
+ 203092731 \lambda^3 
+ 1398704409 \lambda^2  \\ 
& \hspace{0.8in} + 3748721013 \lambda +2969005833 ) \\ 
& -4 a_{20}^6 b_{02}^2 \lambda^3 ( \lambda-1) 
( 488213167 \lambda^5 + 3344141799 \lambda^4 + 596192742 \lambda^3 \\ 
& \hspace{1.2in} 
-\! 12028807314 \lambda^2 
+ 57678458859 \lambda 
+ 102228640299 ) 
\end{split}
\end{equation*}
\begin{equation*}
\begin{split}
& +a_{20}^2 b_{02}^6 (\lambda \!-\! 1)^3 
( 24084923 \lambda^6
- 1850127480 \lambda^5 
- 6861739149 \lambda^4 \\ 
& \qquad 
+\! 1283638536 \lambda^3 
+ 11199899889 \lambda^2 
- 5507145936 \lambda 
-10721822175 
) \\ 
& -3 a_{20}^4 b_{02}^4 (\lambda \!-\! 1)^2 \lambda 
( 192786817 \lambda^6
+ 2736149472 \lambda^5 
+ 4596112377 \lambda^4 \\ 
& \quad 
-\! 12782671128 \lambda^3 
- 13724853597 \lambda^2 
- 2481265224 \lambda
-22052498589).
\end{split}
\end{equation*}

\noindent 
{\bf (b)} \ If $b_{20} = -\frac{1}{12}b_{02} (\lambda+9)$, then
\begin{equation*}
\begin{split}
L_5&=-\,\frac{b_{02}\lambda(\lambda+9)}{4169880(5\lambda+9)^3}\, \tilde{f}_1,\\
L_6&=-\,\frac{\pi\, b_{02}\, \lambda^3(\lambda+9)}
{143327232a_{20}(5\lambda+9)^4} \, \tilde{f}_2,\\
L_7&=-\, \frac{b_{02}\, \lambda(\lambda+9) }{183282238339200a_{20}^2
(5  \lambda+9)^5}\, \tilde{f}_3,
\end{split}\end{equation*}
where 
\begin{equation*}
\begin{split}
\tilde{f}_1=&
(43092 a_{20}^4 + 52305 a_{20}^2 b_{02}^2 + 8675 b_{02}^4 ) \lambda^6 
\\ 
& + (386532 a_{20}^4 +597639 a_{20}^2 b_{02}^2 + 86980 b_{02}^4 ) \lambda^5 \\
& + 9 (45900 a_{20}^4 +56538 a_{20}^2 b_{02}^2 - 8377 b_{02}^4 ) \lambda^4 \\ 
& - 162 (5202 a_{20}^4 + 68961 a_{20}^2 b_{02}^2 + 11720 b_{02}^4) \lambda^3 \\
& - 729 b_{02}^2 (47283 a_{20}^2 + 6887 b_{02}^2) \lambda^2 \\ 
& - 177147 b_{02}^2 ( 159 a_{20}^2 + 28 b_{02}^2 ) \lambda 
  - 1594323 b_{02}^4 ,\\
\tilde{f}_2=&\, \big[ (108 a_{20}^2 + 35 b_{02}^2 ) \lambda^2
+ 54 (24 a_{20}^2 + 7 b_{02}^2) \lambda 
+ 3888 a_{20}^2 + 567 b_{02}^2 \big]\\
&\times \! \big[
(11772 a_{20}^4 + 15255 a_{20}^2 b_{02}^2 + 2300 b_{02}^4) \lambda^4 \\ 
& \quad 
+ 24 (1035 a_{20}^4 + 2256 a_{20}^2 b_{02}^2 + 220 b_{02}^4) \lambda^3  \\ 
& \quad 
- 18 (2034 a_{20}^4 + 13479 a_{20}^2 b_{02}^2 + 1036 b_{02}^4) \lambda^2 \\ 
& \quad - 2592 b_{02}^2 (417 a_{20}^2 + 25 b_{02}^2 ) \lambda 
- 12393 b_{02}^2 (81 a_{20}^2 +4 b_{02}^2) \big],\\
\tilde{f}_3=&\ 279936 a_{20}^8 (\lambda-1) \lambda^5
( 38218052 \lambda^4 
+ 790049163  \lambda^3 
+  5940016749  \lambda^2 \\ 
& \hspace{1.4in} 
+ 19219084137  \lambda
+22987749699 
) \\ 
& +56 b_{02}^8 (\lambda+9) (5 \lambda+9)^4 
( 
 4362337 \lambda^5
+ 301281  \lambda^4 
- 20129886  \lambda^3 \\ 
& \hspace{1.7in}  
- 60022134  \lambda^2 
- 143311923  \lambda 
-39858075  
) \\
& +324 a_{20}^6 b_{02}^2 \lambda^3 (5 \lambda+9) 
( 
 12357727123 \lambda^6
+ 228212204300 \lambda^5 \\ 
& \qquad \qquad 
 +\! 1303142758287 \lambda^4 
+688198402656 \lambda^3 
- 17536030592607 \lambda^2 \\ 
&\qquad \qquad 
-\! 57374137182060 \lambda 
-55078300245699 
) \\ 
& +3 a_{20}^2 b_{02}^6 (5 \lambda+9)^3 
( 
 5875349809 \lambda^7
+ 75871528713 \lambda^6 
+173810927169 \lambda^5 \\ 
&\qquad \qquad  
- 698576296239 \lambda^4 
- 3526727842629 \lambda^3 
- 8234719793325 \lambda^2 
\end{split}
\end{equation*}
\begin{equation*}
\begin{split}
&\qquad \qquad  
-\! 10205502265773 \lambda 
-2605402788525 ) \\ 
& +27 a_{20}^4 b_{02}^4 \lambda (5 \lambda \!+\! 9)^2 
( 
 15822388201 \lambda^7
+ 255144264153 \lambda^6 \\ 
& \qquad \qquad  
+\! 1105548667713 \lambda^5
- 1017555100407 \lambda^4 
- 18518298666957 \lambda^3 \\ 
& \qquad \qquad  
-\! 51274399423149 \lambda^2 
- 74254559009373 \lambda 
- 56058065936181).
\end{split}
\end{equation*}

\noindent 
{\bf (c)} \ If $a_{20}^2 = \frac{28 b_{20} [7 b_{20} + b_{02}(\lambda+6)]}
{3 (\lambda+6)^2}$, the following holds: 
\begin{equation*}
\begin{split}
L_5=& \, \frac{128 b_{20}^5} {715 (\lambda+6)^4}\, 
\lambda (\lambda-1) (2 \lambda-9) (5 \lambda+16),\\
L_6= & \, 0,\\
L_7= & -\frac{128b_{20}^7}{692835 (\lambda+6)^6}\, 
\lambda(\lambda-1)(2\lambda-9) \\
& \times (1587\lambda^3 -82636\lambda^2 -527988\lambda -838080).
\end{split}
\end{equation*}

\noindent 
{\emph{\bfseries Case}} {\bf (A2)} \ 
$a_{20}=0$, under which $L_3$ is reduced to  
$$
L_3= -\, \frac{8  b_{20} b_{02}}{105} \, \lambda
\big[ 7 b_{20} + b_{02}(\lambda+6 )\big],
$$
and higher Lyapunov constants are  
\begin{equation*}
\begin{split}
L_4=&-\frac{\pi b_{02}^3 b_{11}} {3136}
\, \lambda (\lambda-1) (5 \lambda+9) ,\\
L_5=&-\frac{128 b_{02}^3} {108153045 (\lambda+6)}
\, \lambda (\lambda-1) (2 \lambda-9) \\ 
& \quad \times \big[ 9 b_{02}^2 (5 \lambda+16)(\lambda+6)^2 
+49 b_{11}^2 (27 \lambda+50) \big],\\
L_6=&-\frac{\pi b_{02}^3 b_{11}} {59006976 (\lambda+6)}
\, \lambda (\lambda-1) 
\big[ 3 b_{02}^2 (\lambda+6)^2 (562 \lambda^2 + 5085 \lambda +13365 ) \\
& \hspace{2.0in} - b_{11}^2 ( 60074 \lambda^2 - 6615 \lambda -235935 ) \big],\\
L_7=&\frac{128 b_{02}^3 }{231084662834025 (\lambda+6)^3}
\lambda ( \lambda-1) \\ 
& \times \!\!  \big[ 405 b_{02}^4 (2 \lambda -9)(\lambda+6)^4 
(1587\lambda^3 -82636\lambda^2 -527988\lambda -838080) \\ 
& + 441 b_{02}^2 b_{11}^2 (\lambda+6)^2  
(2699488 \lambda^4 +1664883 \lambda^3 -12695418 \lambda^2 \\ 
& \hspace{1.50in} +64285812 \lambda +193185000 ) \\ 
& + b_{11}^4 (17594187058 \lambda^4 + 24560280388 \lambda^3 
-109958232048 \lambda^2 \\ 
& \qquad \ 
- 10609846128 \lambda +343997992800 ) \big].
\end{split}
\end{equation*}

\noindent 
{\emph{\bfseries Case}} {\bf (A3)}  \ 
$b_{20}=-\frac{1}{7}b_{02}(6 + \lambda)$, for which $L_3$ becomes 
$$
L_3= \frac{2a_{20}^2 b_{02}}{245}\, \lambda(\lambda+6)^2,
$$ 
and higher Lyapunov constants are given by 
\begin{equation*}
\begin{split}
L_4=&-\frac{ \pi\, b_{02} \lambda (\lambda-1) (9 + 5 \lambda) } 
{43904 (6 + \lambda)}
\, \big[ a_{20} b_{02}^2 \lambda^2 
+ 2 b_{20}^2 (20 a_{20} + 7 b_{11}) \lambda \\
     &
\qquad +\!12 a_{20} ( 17 b_{02}^2- 49 a_{20}^2 - 49 a_{20} b_{11} )
+ 21 b_{11} (4 b_{02}^2 -7 a_{20} b_{11})  
\big],\\
L_5=&\frac{18 a_{20}^4 b_{02}}{105105}
\, \lambda (2 \lambda + 9) ( 7 \lambda+12) (\lambda + 2) (\lambda+6) ,\\
L_6=&\frac{\pi\, a_{20} b_{02}^3 \lambda (\lambda+6)}{4956585984}
\,\big[ 2 ( 34496 a_{20}^2 + 15375 b_{02}^2) \lambda^4 \\ 
& \qquad +\! 3 ( 29792 a_{20}^2 + 165045 b_{02}^2) \lambda^3  
- 18( 279104 a_{20}^2 + 37917 b_{02}^2) \lambda^2 \\ 
& \qquad -\! 27 (130144 a_{20}^2 + 103149 b_{02}^2) \lambda  
+1620 ( 1568 a_{20}^2 + 891 b_{02}^2)  \big],\\
L_7=&\frac{2a_{20}^2 b_{02}}{245}\, \lambda(\lambda+6)^2. 
\end{split}
\end{equation*}

\noindent 
{\bf Case (B)} \ $b_{20}=0$. For this case, we have 
\begin{equation*}
\begin{split}
L_1=& -\frac{2(a_{11} + 2 b_{02})}{3}\, \lambda, \\ 
L_2=& -\frac{ \pi b_{02}(2a_{20}+ b_{11})}{8}\, \lambda,\\
L_3=&-\frac{2 b_{02}(2a_{20} + b_{11})}{315}\, 
(8 a_{02} - 9 b_{11})\, \lambda (\lambda + 6), \\  
L_4=&-\frac{\pi b_{02}(2a_{20}\!+\! b_{11})}{9216}\, 
(28 a_{02}^2 \!+\! 36 b_{02}^2 \!-\! 36 a_{02} b_{11} \!+\! 27 b_{11}^2)
\, \lambda (\lambda+3) (\lambda+9),\\
L_5=&\ \frac{b_{02}(2a_{20} \!+\! b_{11})} {1216215}\, 
\big[ 27 b_{11}( 27 a_{02}^2 \!+\! 120 b_{02}^2 
\!-\! 60 a_{02} b_{11} \!+\! 35 b_{11}^2) \\
& \qquad \qquad \qquad - 128 a_{02} (10 a_{02}^2 \!+\! 27 b_{02}^2) \big]
\, \lambda (\lambda+2) (\lambda+12) (2 \lambda+9) ,\\
L_6=&-\frac{\pi b_{02}(2a_{20}+ b_{11})} {28311552}
\big[ 16 (143 a_{02}^4 \!+\! 594 a_{02}^2 b_{02}^2 \!+\! 243 b_{02}^4) \\ 
&\hspace*{0.5in} -\! 288 a_{02} b_{11}( 11 a_{02}^2 \!+\! 45 b_{02}^2) 
\!+\! 1080 b_{11}^2 (3 a_{02}^2 \!+\! 7 b_{02}^2 \\
&\hspace*{0.5in} -\! 45 b_{11}^3 ( 56 a_{02} \!-\! 27 b_{11}) \big]
 \lambda (\lambda+3) (\lambda+6) (\lambda+15) (2 \lambda+3).
\end{split}
\end{equation*}
Note that in the above computations, 
$L_{k-1}=0$, $k=1,2, \cdots,6$ have been used in computing $L_k$.

Now, by carefully analyzing the above Lyapunov constants, we obtain 
the following result. 

\begin{theorem}\label{Thm3.1} 
For system \eqref{Eq3.1}, maximal six small-amplitude 
limit cycles can bifurcate from the origin and maximal 
seven small-amplitude 
limit cycles can exist in the neighborhood of infinity. 
Moreover, the first seven Lyapunov constants at the origin (or infinity) 
of system \eqref{Eq3.1} vanish if and only if one of the following 
conditions is satisfied: 

{\rm (i)} \hspace{0.10in} $\delta = a_{11}=b_{20}=b_{02}=0$, 

{\rm (ii)} \hspace{0.05in} 
$\delta = b_{20}= a_{11} +2b_{02} = b_{11} +2a_{20}=0$, 

{\rm (iii)} \hspace{0.00in} 
$\delta = a_{20}=b_{02}= a_{11} +b_{20}= b_{11} +a_{02} = 0$, 

{\rm (iv)} \hspace{0.00in} 
$\delta = \lambda-1 = a_{02}=a_{20}= b_{02} +b_{20} = a_{11}-b_{20} = 0$,

\vspace*{0.05in} 
{\rm (v)} \hspace{0.03in} 
$ 
\left\{\! \begin{array}{l}
\delta = \lambda-1 
= 2a_{11}b_{20}+3a_{20}^2-2b_{20}^2=2b_{11}+5a_{20}=0,\\[0.5ex]
8a_{02}b_{20}^2+a_{20}(8b_{20}^2-3a_{20}^2)=4b_{02}b_{20}
-3a_{20}^2+4b_{20}^2=0,
\end{array} 
\right. 
$

\vspace*{0.05in} 
{\rm (vi)} \hspace{0.00in} 
$\left\{\! \begin{array}{l}
\delta = \lambda-\frac{9}{2} = 12a_{11}b_{20}+27a_{20}^2 
-4b_{20}^2=4b_{11}+11a_{20}=0,\\[0.5ex]
32a_{02}b_{20}^2+3a_{20}(8b_{20}^2-9a_{20}^2)=24b_{02}b_{20}
-27a_{20}^2+16b_{20}^2=0.
\end{array}
\right.
$ 
\end{theorem}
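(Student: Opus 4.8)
The plan is to treat the two assertions of Theorem~\ref{Thm3.1} separately: first the ``if and only if'' characterization of the vanishing of $L_0,\dots,L_7$, and then the sharp counts of bifurcating limit cycles.

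For the necessity direction I would solve the polynomial system $L_0=L_1=\cdots=L_7=0$ hierarchically, following exactly the case tree set up before the statement. Since $L_0=2\pi\delta$, the first equation forces $\delta=0$ (this appears in every listed condition); because $\lambda\neq0$, $L_1=0$ then gives $a_{11}=-(2b_{02}+b_{20})$, and $L_2=0$ gives $b_{20}(a_{20}+a_{02})+(2a_{20}+b_{11})(b_{20}+b_{02})=0$. At this point I would branch on $b_{20}\neq0$ (Case~A) versus $b_{20}=0$ (Case~B), and inside Case~A further branch through the subcases (A1)--(A3) and the sub-subcases (a)--(c), imposing $L_3=0,L_4=0,\dots$ in turn. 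The heavy algebra lives in the factors $f_1,f_2,f_3$ and $\tilde f_1,\tilde f_2,\tilde f_3$: these are high degree in $\lambda$ and in $a_{20},b_{02}$, so I would eliminate variables with resultants or a Gr\"obner basis and track which solution branches survive for a free $\lambda$ and which force an isolated value of $\lambda$. I expect the branches to collapse onto (i)--(iii) for generic $\lambda$, onto (iv)--(v) when the elimination pins $\lambda=1$, and onto (vi) when it pins $\lambda=\tfrac{9}{2}$; the delicate point is to argue completeness, i.e.\ that no branch has been dropped and none spuriously added when clearing the denominators that appear in $L_4,\dots,L_7$.

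For the sufficiency direction it does not suffice that $L_0,\dots,L_7$ vanish; one must show the critical point is a genuine center, i.e.\ that \emph{all} focus values vanish. For the conditions algebraic in the coefficients alone---I expect (i), (ii) and (iii)---I would invoke the $y$-axis symmetry criterion of Section~2: each subsystem of \eqref{Eq3.1} is reversible with respect to the $y$-axis exactly when $\dot x$ is even and $\dot y$ is odd in $x$, and condition~(i), $\delta=a_{11}=b_{20}=b_{02}=0$, is precisely this reversibility for the upper subsystem (the lower one being linear and automatically reversible). For the conditions pinning $\lambda$ to a special value---(iv) and (v) at $\lambda=1$, and (vi) at $\lambda=\tfrac{9}{2}$---such symmetry should fail, so I would instead construct an explicit first integral or integrating factor for each reduced system; the case $\lambda=1$ reduces to a polynomial switching system and should recover the integrable families of \cite{GT2003}, which I would use as a cross-check.

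Finally, for the limit-cycle counts I would argue via the order of the weak focus together with an independence computation. To obtain the lower bounds I would exhibit one parameter point with $\lambda>0$ at which $L_0=\cdots=L_5=0$ but $L_6\neq0$, and one point with $\lambda<0$ at which $L_0=\cdots=L_6=0$ but $L_7\neq0$, and then check that the differential of $(L_0,L_1,\dots,L_5)$, respectively $(L_0,\dots,L_6)$, with respect to the free parameters has full rank; a standard Hopf-type perturbation then yields six, respectively seven, small-amplitude limit cycles. The matching upper bounds require showing that $L_0=\cdots=L_6=0$ already forces a center when $\lambda>0$, whereas for $\lambda<0$ the sign-dependent factors such as $(\lambda+6)$, $(2\lambda+9)$, $(7\lambda+12)$ and $(\lambda+2)$ occurring in $L_5,L_6,L_7$ can vanish and thereby keep $L_7\neq0$ while the earlier constants are killed; this $\lambda$-sign asymmetry is exactly what separates the count $6$ at the origin from $7$ at infinity. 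I expect the main obstacle to be twofold: proving completeness of the solution variety in the necessity step (the resultant computations are large and denominator-clearing is error-prone), and producing the explicit first integrals needed for sufficiency in the special-$\lambda$ conditions (iv)--(vi), where no symmetry is available.
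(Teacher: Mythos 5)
Your plan follows essentially the same route as the paper: force $\delta=0$, walk the case tree (B), (A3), (A2), (A1)(a)--(c) imposing $L_k=0$ hierarchically, exploit the homogeneity of $f_1,f_2,f_3$ (via $b_{02}^2=k\,a_{20}^2$) and eliminate $k$ by resultants to isolate the special values $\lambda=1$, $\lambda=\tfrac92$ and $\lambda=-\tfrac{16}{5}$, and then get the cyclicity counts from a point with $L_0=\cdots=L_{k-1}=0$, $L_k\neq 0$ plus a full-rank Jacobian check, with the upper bounds resting on the center classification. Two details would need correcting when you carry this out: conditions (ii) and (iii) are \emph{not} $y$-axis reversible (the cross terms $xy$ in $\dot x$ are odd in $x$), and the paper instead establishes them as centers, in the separately stated Theorem~\ref{Thm3.2}, via an integrating factor and the first integral $x^2+y^2$, respectively --- within Theorem~\ref{Thm3.1} itself the ``if'' direction only requires direct verification that $L_0=\cdots=L_7=0$; and the factor responsible for the seventh limit cycle at infinity is $(5\lambda+16)$ in the reduced $f_1$ (vanishing only at $\lambda=-\tfrac{16}{5}<0$), rather than the factors $(\lambda+6)$, $(2\lambda+9)$, $(7\lambda+12)$, $(\lambda+2)$ you cite, though your underlying point --- that the extra order is available only for $\lambda<0$ --- is exactly the paper's mechanism.
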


\begin{proof} 
First of all, note that $\delta = 0$ is a necessary condition for all cases
in order to get limit cycles bifurcating from the origin (or infinity), 
under which $ L_0 = 0$. 

We start from Case (B) in which $b_{20}=0$. 
It is easy to see that for this 
case all $L_i, \, i=2,3, \dots,6 $ contains a same factor 
$ b_{02} ( 2 a_{20} + b_{11}) $. Thus,  
$L_i = 0, \, i=2,3, \dots 6 $, if $ b_{02} ( 2 a_{20} + b_{11}) = 0$, 
implying that the maximal limit cycles can be obtained is two.
When $b_{20} = b_{02} =0$, $L_2 =0$, and $L_1 = 0$ yields one solution: 
$a_{11} = 0$, which gives the condition (i). 
If $b_{20} = 2 a_{20} + b_{11} =0 $, then $L_2 = 0$, and 
$L_1 = 0$ requires $ a_{11} + 2 b_{02} = 0$, which 
yields the condition (ii). 

The Case (A3) in which $ b_{20} = -\frac{1}{7} b_{02} (6 + \lambda)$ is 
simple since it is assumed $b_{20} \ne 0$ and so $b_{02}$. 
It is also assumed $a_{20} \ne 0$ for this case, yielding 
$L_3 \ne 0$, implying that three limit cycles can be obtained 
since one can choose appropriate values of $ a_{11}$ and $b_{11}$ 
to set $ L_1 = L_2 = 0$. 

Next, consider Case (A2) in which $a_{20}=0$. 
Maximal $5$ limit cycles may be obtained by choosing $ b_{02} \ne 0$, and 
setting  $b_{11} = 0$ (so $L_4 = 0$), 
$b_{20} = 0$ (so $L_2 = L_3 =0$), and 
$a_{11} + 2 b_{02} = 0$ (so $L_1 = 0$).  
If $b_{02} =0$, then $L_3 = L_4 = \cdots = L_7 =0$. 
Further, setting $L_1=L_2 =0$ we obtain $a_{11}+ b_{20} 
= b_{11} + a_{02} =0$, which is the condition (iii). 
Another possibility for this case is to set $\lambda=1$, giving 
$L_4 = L_5 = L_6 = L_7 = 0$. Further, letting 
$L_1 = L_2 = L_3 = 0$ yields 
$a_{02} = b_{02} + b_{20} = a_{11} - b_{20} = 0$, leading to 
the condition (iv). 
 
For Case (A1)(c), it is noted that 
$\lambda = 1 $ or $\lambda = \frac{9}{2}$ yields 
$ L_5 = L_7 = 0$ ($L_6$ is already equal to zero). 
$ L_1 = 0$ gives $ b_{02} = -\frac{1}{2}(a_{11} + b_{20})$ which
is substituted into $a_{20}^2 = \frac{28 b_{20} [7 
b_{20} + b_{02} ( \lambda + 6) ] }{ 3 (\lambda+6)^2} $ to obtain 
$ 2 a_{11} b_{20} + 3 a_{20}^2 - 2 b_{20}^2 = 0$ for $\lambda=1$, 
and $12 a_{11} b_{20} + 27 a_{20}^2 - 4 b_{20}^2 = 0$  
for $ \lambda = \frac{9}{2}$.  
Then, for case (v) for which $\lambda = 1$, 
using $a_{20}^2 = \frac{4}{3} b_{20}(b_{20}+b_{02}) $ 
to simplify $L_3 = 0$ yields $ 2 b_{11} + 5 a_{20} = 0$, and 
again using the expression of $a_{20}^2$ as well as 
$b_{11} = -\frac{5}{2} a_{20}$ to simplify $L_2 = 0$ we obtain 
$ 8 a_{02} + a_{20}( 8 b_{20}^2 - 3 a_{20}^2)=0$. Finally, using 
$a_{11} = \frac{2 b_{20}^2 - 3 a_{20}^2}{ 2 b_{20}} $ to simplify 
$L_1 = 0$ results in $ 4 b_{02} b_{20} - 3 a_{20}^2 + 4 b_{20}^2=0$. 
Summarizing the above results leads to the condition (v). 
Following the same procedure, we can obtain the condition (vi). 

Now we come to Cases (A1)(a) and (A1)(b). 
First note that for these two cases, $b_{02} \ne 0$ since it is assumed
$ b_{20} \ne 0$. Therefore, instead of considering the equations 
$L_5 = L_6 = L_7 = 0$, 
we consider the polynomial equations $ f_1 = f_2 = f_3 = 0$ 
for Case (A1)(a) and $ \tilde{f}_1 = \tilde{f}_2 = \tilde{f}_3 = 0$ 
for Case (A1)(b). 
In order to obtain the maximal number of limit cycles, we need to find 
the conditions such that $L_i=0, i=0,1, \dots, k-1$, but $L_k \ne 0$. 
Actually,  setting $ L_i=0, \, i=0,1,2,3,4$, we obtain 
two sets of solutions: 
\begin{equation}\label{Eq3.15}
\begin{split}
\delta & = 0, \\ 
b_{20}&= - \, \frac{b_{02}}{4}\, (\lambda+3), \\ 
a_{11}&= \frac{b_{02}}{4}\, (\lambda-5),\\
a_{02}& = \frac{ a_{20}^2\, \lambda - b_{02}^2 (\lambda-1) }{6a_{20}},\\
b_{11}& = -\, \frac{ a_{20}^2 (\lambda^2 + 21 \lambda + 6) 
- b_{02}^2 (\lambda-1)(\lambda+3) }{6 a_{20} (\lambda-1)},
\end{split}
\end{equation} 
for Case (A1)(a), and 
\begin{equation}\label{Eq3.16}
\begin{split}
\delta & = 0, \\ 
b_{20} &=-\,\frac{b_{02}}{12}\, (\lambda+9), \\ 
a_{11} & = \frac{b_{02}}{12}\, (\lambda-15), \\
a_{02} &= -\,\frac{ 9 a_{20}^2 \, \lambda \, ( \lambda+13) 
                   + b_{02}^2 (\lambda-3) (5 \lambda+9) }
{18 a_{20} (5 \lambda+9)},\\
b_{11}&= \frac{ a_{20}^2 (9 \lambda^2 - 45 \lambda + 162) 
+ b_{02}^2 (\lambda+9)(5 \lambda+9) }{18 a_{20} (5 \lambda+9)},
\end{split}
\end{equation}
for Case (A1)(b). 

Note that the functions $f_1,\,f_2,\,f_3$ and $\tilde{f}_1,\,\tilde{f}_2,\,
\tilde{f}_3$ are homogeneous polynomials in $a_{20}^2$ and $b_{02}^2$. 
So we may introduce $b_{02}^2 = k\, a_{20}^2$ ($k>0$) 
into these polynomials and note that $f_2$ and $\tilde{f}_2$ 
have two factors, one is linear in $k$ and one is quadratic in $k$, 
given as follows: 
\begin{equation}\label{newEq2} 
\begin{array}{ll} 
f_2 = -\,a_{20}^6 f_{2a} f_{2b} = \!\!\!\! & -\,a_{20}^6 
\big[ 7 (\lambda-1)(\lambda +3) k - 4 (\lambda + 6)^2  \big] \\ [0.5ex] 
\!\!\!\! & \hspace{0.10in} \times \! \big[ 
4 (\lambda-1)^2 k^2 + (\lambda-1)(25 \lambda -81) k +196 \lambda^2 \big].
\end{array} 
\end{equation} 
We first solve $f_{2a}=0$ to obtain 
$ k= \frac{4 (\lambda + 6)^2}{7 (\lambda-1)(\lambda +3)}$ 
which is then substituted into $f_1$ and $f_3$ to yield 
$$ 
\begin{array}{ll} 
f_1 = \!\!\!\! & -\, \frac{1296 a_{20}^4}{7} (\lambda -1)(2 \lambda - 9) 
(5 \lambda + 16) ( \lambda+3)^2, \\[0.5ex] 
f_3 = \!\!\!\! & -\frac{699840 a_{20}^8}{49}(\lambda -1)^2(2 \lambda - 9) 
( \lambda + 3)^3 \, C_3  
\end{array} 
$$ 
where 
\begin{equation}\label{newEq1} 
C_3 = (1587 \lambda^3 - 82636 \lambda^2 -  527988 \lambda - 838080). 
\end{equation} 

Since $(\lambda-1)(\lambda+3) \ne 0$, the only solution satisfies 
$f_1 = 0$ and $ f_3 \ne 0$ is $ \lambda = -\frac{16}{5}$. When 
$\lambda = \frac{9}{2}$, $L_i=0,\, i=0,1, \dots, 7$, but it is easy 
to verify that this is a special case of (vi). 

Moreover, for the solution $ \lambda = -\frac{16}{5}$, 
$$
\det(J)= \det \left[ \frac{\partial (L_5, L_6)}
{\partial(k, \lambda)} \right]  
= \frac{8384103915264}{78125}\, a_{20}^{10} \ne 0 \quad 
{\rm for} \ \, a_{20} \ne 0, 
$$ 
implying that seven limit cycles can bifurcate in the small neighborhood 
of infinity. 

For the second factor $f_{2b}$, we eliminate $k$ from the two equations 
$ f_1 = f_{2b} = 0$ to obtain the solution for $k$: 
$$ 
k = -\, \frac{4 \lambda^2 (6337 \lambda^2+27872 \lambda+27783)}
         {(\lambda-1)( 1721 \lambda^3 - 2945 \lambda^2 - 38097 \lambda-45927}, 
$$ 
and a resultant equation,  
$$
\begin{array}{ll}  
{\rm R_{12}} = \!\!\!\! & \lambda (\lambda +1) (\lambda -1)
      (397378 \lambda^4+3696797 \lambda^3+12760835 \lambda^2 \\[0.5ex] 
\!\!\!\!& \hspace{1.10in} +19311435 \lambda+10762227) = 0 ,  
\end{array}
$$ 
which has three real solutions: $\lambda = -2.59473685 \cdots,\, 
-1.58363608 \cdots,\, -1$,\, but none of them yields $k>0$. 
Hence, there are no solutions from $f_{2b}=0$ to generate seven limit cycles.

When the conditions in (\ref{Eq3.16}) are satisfied, similarly, we 
can use $ b_{02}^2 = \tilde{k} \, a_{20}^2 $ to find that 
\begin{equation}\label{newEq3} 
\begin{array}{rl} 
\tilde{f}_2= \!\!\!\! & a_{20}^6 \tilde{f}_{2a} \tilde{f}_{2b} \\[0.5ex] 
= \!\!\!\!  &  a_{20}^6 \big[ 7 (\lambda+9) (5 \lambda+9) \tilde{k} + 108 (\lambda+6)^2 \big] \big[ 4 (5 \lambda+9)^2 (23 \lambda^2-30 \lambda-153) 
\tilde{k}^2 \\[0.5ex] 
\!\!\!\! & 
+9 (5 \lambda L \!+\! 9) (339 \lambda^3 \!+\! 593 \lambda^2 
\!-\! 6459 \lambda \!-\! 12393) \tilde{k}
\!+\! 108 \lambda^2 (\lambda \!-\! 1) (109 \lambda \!+\! 339) \big].
\end{array}
\end{equation} 
Solving $\tilde{f}_{2a}=0$ to obtain 
$\tilde{k} = -\frac{108 (\lambda+6)^2}{7 (\lambda+9) (5 \lambda+9)}$, 
and then substituting it into $\tilde{f}_1$ and $\tilde{f}_3$ yields 
$$ 
\begin{array}{ll} 
\tilde{f}_1 = \!\!\!\! & -\frac{34992 a_{20}^4}{49} (\lambda -1)(2 \lambda - 9) 
(5 \lambda + 16) ( \lambda+9)^2, \\[0.5ex] 
\tilde{f}_3 
= \!\!\!\! & -\frac{17061120 a_{20}^8}{49}(\lambda -1)^2(2 \lambda - 9) 
(5 \lambda + 9)^2 ( \lambda + 9)^3 \, C_3,
\end{array} 
$$  
where $C_3$ is given in \eqref{newEq1}. 
Since $(5 \lambda+9)(\lambda+9) \ne 0$, the only solution satisfies 
$f_1 = 0$ and $ f_3 \ne 0$ is $ \lambda = -\frac{16}{5}$. 
$\lambda =1 $ and $\lambda = \frac{9}{2}$ are not solutions 
since they yield $ \tilde{k}<0$. 
For the solution $ \lambda = -\frac{16}{5}$, we have 
$$
\det(J)= \det \left[ \frac{\partial(L_5,L_6)}
{\partial(\tilde{k}, \lambda)} \right] 
= -\,\frac{ 304277047914997479168}{78125}\, a_{20}^{10} \ne 0 \quad 
{\rm for} \ \, a_{20} \ne 0, 
$$ 
implying that seven limit cycles can bifurcate in the small neighborhood 
of infinity. 

For the second factor $\tilde{f}_{2b}$, similarly we eliminate $\tilde{k}$ 
from the two equations $\tilde{f}_1=0$ and $\tilde{f}_{2b}=0$ to
obtain the solution for $\tilde{k}$: 
$$ 
\tilde{k}= - \textstyle\frac{36 \lambda^2 (\lambda-1)
          (223 \lambda^4+8120 \lambda^3-127950 \lambda^2-778680 \lambda
           -741393)} {(5 \lambda+9)
          /(6419 \lambda^6+31564 \lambda^5-1363197 \lambda^4
          -4806072 \lambda^3+20436705 \lambda^2 +88888428 \lambda+81310473)},
$$ 
and a resultant equation, 
$$ 
\begin{array}{ll} 
{\rm \tilde{R}_{12}} 
= \!\!\!\! &  \lambda (\lambda -1) (\lambda -9) (5 \lambda +9)
(8397602 \lambda^9+84616511 \lambda^8-1494342124 \lambda^7 \\[0.5ex] 
\!\!\!\! & -16706405616 \lambda^6 +16325397720 \lambda^5+375950483190 \lambda^4
\\[0.5ex] 
\!\!\!\! & -\,756410507892 \lambda^3 -10202463072792 \lambda^2
-22941003813786 \lambda \\[0.5ex] 
\!\!\!\! & -\, 15945065065773), 
\end{array}  
$$ 
which has four real solutions: $\lambda = -12.38286360 \cdots,\, 
-8.54108488 \cdots,\,1,\, 9$, $13.13951523 \cdots$, 
but all of them yield $\tilde{k} \le 0$.
Thus, there are no solutions from $\tilde{f}_{2b}=0$ to give 
seven limit cycles. 

Summarizing the above results obtained for Cases (A1)(a) and (A1)(b), 
we conclude that there exist two sets of infinite solutions 
such that system \eqref{Eq3.1} 
can have seven limit cycles bifurcating in the small neighborhood of 
infinity. 

Although we can not obtain seven limit cycles around the origin of 
system \eqref{Eq3.1}, we may find an infinite number of 
solutions for six limit cycles which 
bifurcate in the small neighborhood of the origin, which is still better 
than the five limit cycles obtained in~\cite{GT2003}. 
To find the solutions, it needs $L_i,\, i=0,1, \cdots, 5$, but 
$L_6 \ne 0$. Thus, we only need to solve $ f_1 = 0$ (or $\tilde{f}_1 =0$).  
Letting $b_{02}^2 = k a_{20}^2$, $f_1 = 0 $ becomes 
$ f_1 = -\, a_{20}^2 [ A_2 k^2 + A_1 k + A_0 ]$, where 
$$
\begin{array}{ll}  
A_2 = 7 (\lambda - 1)^2 (\lambda+3) (155 \lambda^2-243), \\[0.5ex] 
A_1 = \lambda (\lambda-1) (2909 \lambda^3+8871 \lambda^2+2547 \lambda+20169), 
\\[0.5ex] 
A_0 = -4 \lambda^3 (967 \lambda^2+8580 \lambda +20637).   
\end{array}
$$  
We want to find solutions satisfying $k>0$ and $ \lambda >0 \ (\lambda \ne 1)$. 
It is easy to show that $f_1 = 0$ has a unique positive solution 
for $k $ when $ \lambda \ge 9\sqrt{3/155}$, and 
does not have solutions when $ 1 < \lambda < 9\sqrt{3/155}$. 
When $ \lambda \in (0,1)$, $A_i < 0, \, i=0,1,2$, and 
$$ 
\begin{array}{ll} 
\Delta = A_1^2 - 4 A_2 A_0 = \!\!\!& 
-81 \lambda^2 (1-\lambda)^3 (311721 \lambda^5+3409519 \lambda^4
+14178654 \lambda^3 \\[0.5ex] 
\!\!\! & +25596434 \lambda^2+14511609 \lambda-5022081),  
\end{array}
$$
which is positive for $\lambda \in (0,\,0.23512585)$, leading to 
that $f_1=0$ has two solutions for $k$ for each $\lambda$ chosen 
from this interval.  
Hence, there exists an infinite number of solutions for 
$k$ satisfying $ f_1 =0$ when $ \lambda \in (0,\,0.23512585) 
\cup (9\sqrt{3/155},\, \infty)$.  
These solutions do not yield $f_2 =0$, since in the above we have already 
shown that $f_1 = f_2 =0$ does not have solutions satisfying 
$k>0$ and $\lambda >0$. 
This indicates that for Case (A1)(a) there exists an infinite 
number of solutions for the existence of six limit cycles around the 
origin of system \eqref{Eq3.1}. Similarly, for Case (A1)(b),
we can prove that $f_1 = 0$ has two positive solutions for $k$ when 
$ \lambda \in (0,\,1)$ and one positive solution when 
$\lambda \in (1,\, 5.132341426)$, implying that for Case (A1)(b)
there also exists an infinite number of solutions 
for the existence of six limit cycles around the
origin of system \eqref{Eq3.1}. 

The proof is complete.
\end{proof} 

\vspace{0.10in} 
Note that the conditions (i)--(vi) given in Theorem~\ref{Thm3.1} yield 
$L_i=0,\, i=0,1, \dots, 7$, implying that they are necessary conditions 
for the origin (or infinity) of system \eqref{Eq3.1} to be a center.  
In the following, we will show that these conditions are also sufficient 
for the origin (or infinity) of system \eqref{Eq3.1} to be a center.
We have the following theorem. 

\begin{theorem}\label{Thm3.2} 
The conditions {\rm (i)--(vi)} given in Theorem \ref{Thm3.1} 
are necessary and sufficient for the origin (or infinity) 
of system \eqref{Eq3.1} to be a center.
\end{theorem}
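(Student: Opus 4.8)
The necessity is immediate from Theorem~\ref{Thm3.1}: if the origin (or infinity) of \eqref{Eq3.1} is a center, then every Lyapunov constant vanishes, in particular $L_0=L_1=\cdots=L_7=0$, and Theorem~\ref{Thm3.1} asserts that this already forces one of the conditions {\rm (i)--(vi)}. Hence it remains only to prove that each of {\rm (i)--(vi)} is \emph{sufficient}. The plan is to exploit the special structure of \eqref{Eq3.1}: every one of the six conditions contains $\delta=0$, so the lower subsystem reduces to the pure rotation $\dot x=-y,\ \dot y=x$, whose transformed half-return map \eqref{Eq2.8} preserves the radius, i.e. $\tilde r_2(\pi,h)=h$. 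By \eqref{Eq2.9} the switching system then has a center precisely when the \emph{upper} half-return map is the identity, $\tilde r_1(\pi,h)\equiv h$. The cleanest way to guarantee this is to produce a first integral $H(x,y)$ of the upper subsystem whose restriction to the $x$-axis is an even function (equivalently, the level curve of $H$ through $(h,0)$ meets the negative $x$-axis at $(-h,0)$); the trajectory from $(h,0)$ then rides this level curve over the upper half-plane back to $(-h,0)$, forcing $\tilde r_1(\pi,h)=h$.

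Two of the cases are essentially immediate. For {\rm (i)}, substituting $a_{11}=b_{20}=b_{02}=0$ shows that the upper subsystem is invariant under $(x,y,t)\mapsto(-x,y,-t)$, and the rotation in the lower half-plane is invariant as well; thus \eqref{Eq3.1} is symmetric with respect to the $y$-axis in the sense of Definition~2.3, and the origin is a center. For {\rm (iii)}, using $a_{20}=b_{02}=0$, $a_{11}=-b_{20}$, $b_{11}=-a_{02}$ one factors the upper subsystem as $\dot x=-y\,\Phi,\ \dot y=x\,\Phi$ with $\Phi=1+(x^2+y^2)^{(\lambda-1)/2}(b_{20}x-a_{02}y)$, whence $x\dot x+y\dot y=0$ and $H=x^2+y^2$ is a first integral; the upper trajectories are circular arcs and $\tilde r_1(\pi,h)=h$ trivially. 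In both cases the first-integral criterion above is met in the strongest possible way.

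The remaining cases {\rm (ii)}, {\rm (iv)}, {\rm (v)}, {\rm (vi)} are not covered by a reflection symmetry or by circular orbits, and here I would construct explicit Darboux first integrals for the upper subsystem. Conditions {\rm (iv)} and {\rm (v)} both impose $\lambda=1$, so the factor $(x^2+y^2)^{(\lambda-1)/2}$ disappears and the upper subsystem becomes an ordinary quadratic polynomial system; for such systems one searches for invariant straight lines and invariant conics (for {\rm (iv)}, for instance, $1-b_{20}x=0$ is already an invariant line of $\dot x=-y(1-b_{20}x)$, with cofactor $b_{20}y$) and assembles a first integral of Darboux type, $H=\prod_i f_i^{\alpha_i}$, possibly including an exponential factor $\exp(P/Q)$, by matching cofactors. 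Condition {\rm (vi)} fixes the resonant value $\lambda=\tfrac92$, for which the analogous quasi-analytic Darboux integral exists; condition {\rm (ii)} is treated in the same framework with $\lambda$ left free. In every instance one then checks that $H|_{y=0}$ is even in $x$, so that the upper half-return map is the identity and the origin (or infinity) is a center.

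The main obstacle is the explicit integration in cases {\rm (v)} and {\rm (vi)}: the parameter relations there are the most intricate, and locating the invariant algebraic curves together with the correct exponents $\alpha_i$ requires genuine computation rather than a symmetry shortcut. I expect the resonant values $\lambda=1$ and $\lambda=\tfrac92$ to be exactly what makes a Darboux first integral possible, mirroring the way these same values produced the vanishing of the higher Lyapunov constants in Theorem~\ref{Thm3.1}. As a fallback, sufficiency in any stubborn case can be confirmed directly from the return map by verifying $\tilde r_1(\pi,h)=\tilde r_2(\pi,h)$ order by order in $h$ via the recursion \eqref{Eq2.5}--\eqref{Eq2.6}, which is guaranteed to close once a first integral is known to exist.
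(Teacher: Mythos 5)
Your overall strategy is exactly the paper's: use $\delta=0$ to reduce the lower subsystem to the rotation $\dot x=-y$, $\dot y=x$, and then establish that the upper half-return map is the identity by exhibiting, for each condition, either a reflection symmetry or a first integral of the upper subsystem whose restriction to $y=0$ is even in $x$. Your treatment of condition (i) (invariance under $(x,y,t)\mapsto(-x,y,-t)$, i.e.\ symmetry with respect to the $y$-axis) and of condition (iii) (the factorization $\dot x=-y\Phi$, $\dot y=x\Phi$ giving $x^2+y^2$ as a first integral) is correct and coincides with what the paper does, and your identification of $1-b_{20}x=0$ as an invariant line with cofactor $b_{20}y$ in case (iv) is also correct.

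The genuine gap is that for conditions (ii), (iv), (v) and (vi) you never actually produce the first integrals; you only announce a Darboux-type search. Those four explicit integrals are the entire substance of the sufficiency proof --- in the paper they are, respectively, a power-of-$(x^2+y^2)$-weighted polynomial $H_1$ obtained from the integrating factor $\tfrac{2}{3}\lambda(x^2+y^2)^{(1-\lambda)/2}$ (note that case (ii) leaves $\lambda$ free, so it is \emph{not} a polynomial Darboux problem and needs this extra idea), a genuine Darboux product $H_3=(b_{20}x-1)\,\ell_+^{\alpha}\ell_-^{1-\alpha}$ with irrational exponents for (iv), a line-squared-times-conic integral $H_4$ for (v), and a generalized (non-algebraic) integral $H_5$ involving $(x^2+y^2)^{3/2}$ and $(x^2+y^2)^{3/4}$ for (vi). Asserting that such integrals ``exist'' at the resonant values $\lambda=1$ and $\lambda=\tfrac92$ is a plausible heuristic, not an argument. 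Your proposed fallback is also circular: one cannot verify $\tilde r_1(\pi,h)=\tilde r_2(\pi,h)$ ``order by order'' for all orders by finite computation, and the claim that the recursion ``is guaranteed to close once a first integral is known to exist'' presupposes exactly the object whose construction you are deferring. To complete the proof you must write down $H_1$, $H_3$, $H_4$, $H_5$ (or equivalent integrating factors) and check the evenness of each restriction to $y=0$, as the paper does.
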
 

\begin{proof} 
The {\it necessity} has been shown in the proof of Theorem~\ref{Thm3.1}. 
Hence, we only need to prove the {\it sufficiency}.
First note that for all the six cases, the lower-half plane is same 
(as $\delta = 0$), described by
\begin{equation*}
\begin{array}{l}
\dot{x}=-y, \\
\dot{y}=x,
\end{array}
\qquad (y<0).
\end{equation*}
This system has a first integral $H_0(x,y) = x^2 + y^2$, 
which is an even function of $x$ (i.e., symmetric with the $y$-axis). 
Thus, in the following, for each case we only list the equations for
the upper-half plane.

When the condition (i) holds,
the equations for the upper-half plane of
system (\ref{Eq3.1}) become
\begin{equation}\label{Eq3.9}
\begin{array}{l}
\dot{x}=-y+ (a_{20}x^2+a_{02}y^2) (x^2+y^2)^{\frac{(\lambda-1)}{2}} , \\[0.5ex]
\dot{y}=x+ b_{11}xy\, (x^2+y^2)^{\frac{(\lambda-1)}{2}} ,
\end{array}
\qquad (y>0), \\
\end{equation}
which is symmetric with the $y$-axis, and so 
the origin (or infinity) is a center.

When the condition (ii) is satisfied, the equations for the
upper-half plane of system (\ref{Eq3.1}) can be rewritten as
\begin{equation}\label{Eq3.10}
\begin{array}{l}
\dot{x}=-y+ (a_{20}x^2-2b_{02}xy+a_{02}y^2) (x^2+y^2)^{\frac{(\lambda-1)}{2}},
\\[0.5ex]
\dot{y}=x- (2a_{20}x-b_{02}y)y\, (x^2+y^2)^{\frac{(\lambda-1)}{2}} ,
\end{array}
\qquad (y>0),
\end{equation}
which has an integrating factor
$
\frac{2}{3}\lambda(x^2+y^2)^{\frac{(1-\lambda)}{2}}.
$
Then, system \eqref{Eq3.10} becomes 
$$ 
\begin{array}{l}
\dot{x}=-\frac{2}{3}\lambda y(x^2+y^2)^{\frac{(1-\lambda)}{2}}
+\frac{2}{3}\lambda(a_{20}x^2-2b_{02}xy+a_{02}y^2),
\\[0.5ex]
\dot{y}=\frac{2}{3}\lambda x(x^2+y^2)^{\frac{(1-\lambda)}{2}}
-\frac{2}{3}\lambda(2a_{20}x-b_{02}y)y ),
\end{array}
$$
which has a first integral,
$$
\begin{array}{ll}
H_1(x,y)= &\!\!\! 6 \lambda (x^2 + y^2)^{\frac{(3-\lambda)}{2}}  
+ 2 \lambda (\lambda-3)\, y 
\big[ 3 (a_{20}x - b_{02}y)x  + a_{02}y^2 ) \big]. 
\end{array}
$$ 
It is seen that $H_1$ is an even function of $x$ when $y=0$, and so
the origin (or infinity) of system \eqref{Eq3.1} is 
a center (e.g., see Theorem 2.2 in~\cite{Li2015}). 

When the condition (iii) holds, the equations for the upper-half
plane of system (\ref{Eq3.1}) can be rewritten as
\begin{equation}\label{Eq3.11}
\begin{array}{l}
\dot{x}=-y - (b_{20}x-a_{02}y) y\, (x^2+y^2)^{\frac{(\lambda-1)}{2}} , 
\\[0.5ex]
\dot{y}=x + (b_{20}x-a_{02}y) x \, (x^2+y^2)^{\frac{(\lambda-1)}{2}} ,
\end{array}
\qquad (y>0).
\end{equation}
It is easy to see that system \eqref{Eq3.11} has a first integral,
$$
H_2(x,y)=(x^2 + y^2)^{\frac{\lambda}{3}},
$$
which is an even function of $x$ and so
the origin (or infinity) is a center.

When the condition (iv) is satisfied, the equations for the
upper-half plane of system \eqref{Eq3.1} become
\begin{equation}\label{Eq3.12}
\begin{array}{l}
\dot{x}=-y+b_{20}xy, \\[0.5ex]
\dot{y}=x+(b_{20}x^2+b_{11}xy-b_{20}y^2),
\end{array}
\qquad (y>0). 
\end{equation}
It can be shown that system \eqref{Eq3.12} has a first integral,
$$
H_3(x,y)= (b_{20}x-1)\big[ b_{20}x+1+12(b_{11}-\gamma)y \big]^\alpha
\big[ b_{20}x+1+12(b_{11}+\gamma)y \big]^{(1-\alpha)},
$$
where
$$
\alpha= \frac{4b_{20}^2}{ \gamma(\gamma+b_{11})}, 
\quad \gamma=\sqrt{b_{11}^2+8b_{20}^2}.
$$
$H_3(x,y)$ is an even function of $x$ when $y=0$ because
$
H_3(x,0) =b_{20}^2x^2-1,
$
and so the origin is a center~\cite{Li2015}. 

When the condition (v) holds,
the upper-half plane has a first integral,
$$
H_4(x,y) \!=\! (2b_{20}x-a_{20}y-2)^2 \big[ 4(b_{20}x + 1)^2
-(4a_{20}+12a_{20} b_{20}x)y+(3a_{20}^2-8b_{20}^2)y^2  \big],
$$
which is an even function of $x$ when $y=0$ since 
$
H_4(x,0)=16(1-b_{20}^2x^2)^2.
$
So the origin is a center~\cite{Li2015}. 

Finally, when the condition (vi) is satisfied, the equations
for the upper-half plane of system \eqref{Eq3.1} become
\begin{equation}\label{Eq3.13}
\begin{split}
\dot{x}= &-y+ \frac{1}{96 b_{20}^2}(x^2 + y^2)^{\frac{7}{4}}
\big[ 96 a_{20} b_{20}^2 x^2 - 8 b_{20} ( 27 a_{20}^2 - 4 b_{20}^2) x y\\
&\hspace{1.5in}+ 9 a_{20} (9 a_{20}^2 \!-\! 8  b_{20}^2) y^2 \big],\\
\dot{y}=&x+\frac{1}{24\, b_{20}} (x^2+y^2)^{\frac{7}{4}}
\big[ 24 b_{20}^2 x^2 - 66 a_{20} b_{20}xy + (27 a_{20}^2 - 16 b_{20}^2 )y^2  
\big],
\end{split}
\end{equation}
which has a first integral,
\begin{equation*}
\begin{split}
H_5(x,y)= & \frac{9 a_{20}^2}{64b_{20}^2 
(9a_{20}^2+ 16b_{20}^2)(x^2+y^2)^{\frac{3}{2}}}
\Big\{\! 4096 b_{20}^4 \\ 
& + (4 b_{20} x \!-\!3 a_{20} y)^4 
\big[ 8 b_{20}^2 (x^2 \!-\! y^2) 
\!-\! 3 a_{20} (8 b_{20} x \!-\! 3 a_{20} y) y \big] 
(x^2 \!+\! y^2)^{\frac{3}{2}}  \\
& + 128 b_{20}^2 y (4 b_{20} x \!-\! 3 a_{20} y)
(24 a_{20} b_{20} x \!-\! 9 a_{20}^2 y \!+\! 16 b_{20}^2 y) 
(x^2 + y^2)^{\frac{3}{4}} \Big\}.
\end{split}
\end{equation*}
Since 
$
H_5(x,0)=-\frac{288 a_{20}^2 b_{20}^2 (2 +b_{20}^2 x^6 
|x|^3)}{(9 a_{20}^2 + 16b_{20}^2) |x|^3},  
$
$H_5(x,y)$ is an even function of $x$ when $y=0$, 
and hence, the origin is a center~\cite{Li2015}. 
\end{proof}

Combining the results in Theorems~\ref{Thm3.1} and \ref{Thm3.2}, we have 

\begin{theorem}\label{Thm3.3}
For system \eqref{Eq3.1}, the highest order of focus value is $7$.
\end{theorem}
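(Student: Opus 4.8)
The plan is to obtain Theorem~\ref{Thm3.3} as an immediate consequence of Theorems~\ref{Thm3.1} and~\ref{Thm3.2}, by pairing an \emph{attainability} statement (a weak focus of order exactly $7$ does occur) with a \emph{ceiling} statement (no weak focus of order $8$ or higher can occur). Since the $k$th focus value $L_k$ is the coefficient of $h^k$ in the successive (displacement) function $\Delta(h)$, the ``highest order of focus value'' is the largest $k$ for which one can arrange $L_0=L_1=\cdots=L_{k-1}=0$ while $L_k\neq0$, and I would prove this largest $k$ equals $7$.

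First I would record attainability. In the analysis of Cases (A1)(a) and (A1)(b) carried out in the proof of Theorem~\ref{Thm3.1}, setting $\delta=0$ together with the parameter relations \eqref{Eq3.15} (respectively \eqref{Eq3.16}) gives $L_0=L_1=\cdots=L_4=0$; the specialization $\lambda=-\tfrac{16}{5}$ then forces $f_1=0$ (respectively $\tilde f_1=0$) while keeping $f_3\neq0$ (respectively $\tilde f_3\neq0$), and the non-vanishing Jacobian $\det\!\left[\partial(L_5,L_6)/\partial(k,\lambda)\right]\neq0$ established there shows that $L_5$ and $L_6$ can be driven to zero independently. Thus there exist admissible parameters with $L_0=L_1=\cdots=L_6=0$ but $L_7\neq0$, so the point at infinity is a weak focus of order exactly $7$, and the highest order is at least $7$.

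It remains to show the ceiling, namely that the order cannot reach $8$. Suppose $L_0=L_1=\cdots=L_7=0$. By Theorem~\ref{Thm3.1} this simultaneous vanishing holds if and only if the parameters satisfy one of the conditions (i)--(vi); and by Theorem~\ref{Thm3.2} each of (i)--(vi) is sufficient for the origin (or infinity) to be a center, as witnessed by the explicit first integrals $H_0,\dots,H_5$ and the symmetry arguments in its proof. A center has \emph{every} focus value equal to zero, whence $L_k=0$ for all $k\ge8$ automatically. Consequently one can never arrange $L_0=\cdots=L_7=0$ with $L_8\neq0$ (or any higher focus value nonzero), so no weak focus of order $8$ or higher exists, and combining the two parts fixes the highest order at exactly $7$. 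The computational weight of the argument lives entirely in Theorems~\ref{Thm3.1} and~\ref{Thm3.2}; the only conceptual point requiring care is the passage from the \emph{finite} data $L_0=\cdots=L_7=0$ to the vanishing of the \emph{entire} sequence $\{L_k\}$, which is precisely what the center characterization of Theorem~\ref{Thm3.2} supplies.
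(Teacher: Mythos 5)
Your proposal is correct and follows essentially the same route as the paper, which derives Theorem~\ref{Thm3.3} simply by combining Theorems~\ref{Thm3.1} and~\ref{Thm3.2}: attainability of order $7$ comes from the Case (A1) analysis at $\lambda=-\tfrac{16}{5}$ with $f_3\neq0$, and the ceiling comes from the fact that $L_0=\cdots=L_7=0$ forces one of the center conditions (i)--(vi), whence all focus values vanish. You have merely made explicit the logical combination that the paper leaves implicit.
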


\subsection{Isochronous centers of system \eqref{Eq3.1}}

Having established the center conditions in the previous section for
system (\ref{Eq3.1}), we now discuss the isochronous center conditions for
this system.

First, by a direct computation, we can show that
under any of the conditions (ii)--(vi), 
no isochronous center can exist because these conditions cannot lead to
all periodic constants vanishing. When the condition (i) holds,
the periodic constants are obtained as
$$
\begin{array}{ll}
\tau_1= \displaystyle\frac{2}{3}\,(a_{20} + 2 a_{02}-b_{11}), \\[1.8ex]
\tau_2= \displaystyle\frac{\pi}{16}a_{02}^2 \big[ 2a_{20}\lambda 
+ a_{02} (3 \lambda+5) \big] ,
\\ [1.8ex]
\tau_3= -\, \displaystyle\frac{2a_{02}^3}{105} \, (\lambda^2-1), \\ [1.8ex]
\tau_4= \displaystyle\frac{3\,\pi a_{02}^4 }{1024} 
\, (\lambda^2-1) (\lambda+3), 
\\[1.8ex]
\tau_5= -\, \displaystyle\frac{2 a_{02}^5}{1001} \, (\lambda^2-1)  
(14 \lambda^2 + 91\lambda + 139 ),
\\[1.8ex]
\tau_6= \displaystyle\frac{3\,\pi a_{02}^6}{4587521024}\, (\lambda^2-1) 
(1190\lambda^2 + 8817\lambda +13898 ),
\\[1.0ex]
\quad \ \vdots
\end{array}
$$
Therefore, we have the following theorem.

\begin{theorem}\label{Thm3.4}
The origin (or infinity) of system \eqref{Eq3.1} is an isochronous center 
if and only if one of the following conditions holds:
$$
\begin{array}{cl}
{\rm (I)} & a_{11}=b_{02}=b_{20}=a_{02}= b_{11}-a_{20}=0; \\[0.5ex]
{\rm (II)} & \lambda-1= a_{11}=b_{02}=b_{20}= b_{11} +2a_{02} 
= a_{20} +4a_{02} = 0;
\\[0.5ex]
{\rm (III)} & \lambda +1= a_{11}=b_{02}=b_{20}= b_{11}-3a_{02} 
= a_{20}-a_{02} = 0.
\end{array}
$$
\end{theorem}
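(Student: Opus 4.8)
The plan is to use the fact that an isochronous center is in particular a center, so that by Theorem~\ref{Thm3.2} the parameters must satisfy one of the conditions (i)--(vi), and then to impose the additional requirement that every periodic constant vanish. First I would dispose of (ii)--(vi): under each of these five conditions one substitutes into the period expansion and exhibits a leading periodic constant that cannot be annihilated, so none of them supports an isochronous center (this is the ``direct computation'' referred to just above the statement). This reduces everything to condition (i), $\delta=a_{11}=b_{20}=b_{02}=0$, for which the explicit constants $\tau_1,\dots,\tau_6$ displayed above are available.

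For the necessity direction I would then solve $\tau_1=\tau_2=\cdots=0$ inside case (i), splitting on the factor $a_{02}$. If $a_{02}=0$, every $\tau_k$ with $k\ge 2$ vanishes automatically (each carries a positive power of $a_{02}$), while $\tau_1=\frac{2}{3}(a_{20}+2a_{02}-b_{11})=0$ forces $b_{11}-a_{20}=0$, which is condition (I). If $a_{02}\ne 0$, then $\tau_3=-\frac{2a_{02}^3}{105}(\lambda^2-1)=0$ forces $\lambda^2=1$, and one checks that $\tau_4,\tau_5,\tau_6$ then also vanish. For $\lambda=1$ the equations $\tau_2=0$ and $\tau_1=0$ give $a_{20}+4a_{02}=0$ and $b_{11}+2a_{02}=0$, i.e.\ condition (II); for $\lambda=-1$ the same two equations give $a_{20}-a_{02}=0$ and $b_{11}-3a_{02}=0$, i.e.\ condition (III). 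This establishes that (I)--(III) are necessary.

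For sufficiency I would exploit that under (i) the lower half-plane is the pure rotation $\dot x=-y,\ \dot y=x$, whose half-orbit from the positive to the negative $x$-semiaxis always takes time exactly $\pi$; hence isochronicity is equivalent to the upper half-return time $T_1(\pi,h)$ being identically $\pi$. Under condition (I) the upper system is $\dot x=-y+a_{20}x^2(x^2+y^2)^{(\lambda-1)/2}$, $\dot y=x+a_{20}xy(x^2+y^2)^{(\lambda-1)/2}$, and a direct computation gives $x\dot y-y\dot x=x^2+y^2$, so $\dot\theta\equiv 1$; the upper half-orbit sweeps angle $\pi$ in time $\pi$ for every $h$, the total period is $2\pi$, and the center is isochronous. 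For condition (II) ($\lambda=1$, a quadratic upper field) and condition (III) ($\lambda=-1$, a rational upper field) one no longer has $\dot\theta\equiv 1$, and I would instead construct an explicit change of coordinates $(u,v)=\Phi(x,y)=(x+\cdots,\,y+\cdots)$ conjugating the upper system to the linear center $\dot u=-v,\ \dot v=u$ and, crucially, fixing the switching line, $\Phi(\{y=0\})=\{v=0\}$ with $\Phi(\{y>0\})=\{v>0\}$. Such a map forces $T_1(\pi,h)\equiv\pi$ and hence $T\equiv 2\pi$.

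The main obstacle is exactly this last step. It does \emph{not} suffice to know that the upper-half vector field is an isochronous center as a \emph{continuous} system (for $\lambda=1$ this can be recognized from Loud's list, and for $\lambda=-1$ one first integrates explicitly), because isochronicity of the switching system demands that the upper half-return time equal $\pi$ \emph{separately} -- a statement about the half-orbit, which is not symmetric with respect to the switching line $\{y=0\}$. The technical heart is therefore to produce $\Phi$ explicitly (equivalently, a first integral of the upper system together with a linearization whose angular coordinate is $t+\mathrm{const}$) and to verify its behavior on $\{y=0\}$; once $\Phi$ is available the identity $T\equiv 2\pi$, and with it the sufficiency of (I)--(III), follows at once.
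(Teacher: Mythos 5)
Your overall architecture matches the paper's: necessity is read off from $\tau_1,\tau_2,\tau_3$ under condition (i) after eliminating (ii)--(vi) by direct computation, and your case split on $a_{02}$ correctly recovers (I), (II), (III) (your checks of $\tau_2=0$ at $\lambda=\pm1$ agree with the stated conditions). For sufficiency under (I) you and the paper use the identical observation $x\dot y-y\dot x=x^2+y^2$, hence $\dot\theta\equiv 1$. Where you diverge is in cases (II) and (III): you propose to build a linearizing conjugacy $\Phi$ to $\dot u=-v,\ \dot v=u$ that maps $\{y=0\}$ to $\{v=0\}$, whereas the paper instead exhibits an explicit \emph{transversal commuting system} for each half-plane (for (III) after first transforming by $u=x(x^2+y^2)^{-2/3}$, $v=y(x^2+y^2)^{-2/3}$ and producing an inverse integrating factor and first integral) and invokes Sabatini's Lie-bracket characterization of isochronicity. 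These are equivalent characterizations, but the paper's route is computationally lighter: a commuting transversal field can be written down from the first integral without solving for the full linearization, and the piecewise transversal systems it displays agree on $\{y=0\}$, which plays exactly the role you assign to the requirement $\Phi(\{y=0\})=\{v=0\}$. You correctly identify the genuine subtlety -- that isochronicity of the \emph{switching} system requires the upper half-return time to equal $\pi$ separately, not merely isochronicity of the upper field as a continuous system -- but your proposal stops at announcing that $\Phi$ must be constructed; that construction (or the paper's substitute, the explicit transversal systems \eqref{Eq3.19} and \eqref{Eq3.21}) is the actual content of the sufficiency proof for (II) and (III), and until it is carried out your argument for those two cases is a plan rather than a proof.
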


\begin{proof}
The {\it necessity} can be easily proved by setting 
the periodic constants $\tau_i = 0, \, i=1,2, \cdots 6$. 
To prove the {\it sufficiency}, we consider the three 
systems under the three conditions (I), (II) and (III). 
First, consider condition (I). When this condition holds,  
the equations for the upper-half plane
of system \eqref{Eq3.1} can be written as
\begin{equation}\label{Eq3.17}
\begin{array}{l}
\dot{x}=-y+a_{20}x^2(x^2+y^2)^{\frac{(\lambda-1)}{2}}, \\[0.5ex]
\dot{y}=x+a_{20}xy(x^2+y^2)^{\frac{(\lambda-1)}{2}}.
\end{array}\end{equation}
A simple calculation gives $\frac{d\theta}{dt}=1$, and so the origin 
(or infinity) of system \eqref{Eq3.1} is an isochronous center. 

When the condition (II) is satisfied, system \eqref{Eq3.1} becomes
\begin{equation}\label{Eq3.18}
\begin{array}{ll}
\begin{array}{l}
\dot{x}=-y - a_{02} (4 x^2-y^2), \\[0.5ex]
\dot{y}=x-2a_{02}xy,
\end{array}
& (y>0), \\[2.0ex]
\begin{array}{l}
\dot{x} = -y, \\ [0.0ex]
\dot{y} = x,
\end{array}
& (y<0).
\end{array}
\end{equation}
This system has a transversal system
\begin{equation}\label{Eq3.19}
\begin{array}{ll}
\begin{array}{l}
\dot{x} =x- 4 a_{02}\, xy \, (1-a_{02}y), \\[0.5ex]
\dot{y} =y- a_{02}\,y^2 (3-2a_{02}y),
\end{array}
& (y>0), \\[2.0ex]
\begin{array}{l}
\dot{x}=x, \\[0.0ex]
\dot{y}=y,
\end{array}
& (y<0).
\end{array}
\end{equation}
So by Theorem 2.1 in \cite{Sabatini1992}, the origin of system \eqref{Eq3.1}
is an isochronous center.

When the condition (III) is satisfied, introducing the transformation,
$$
u=x (x^2 + y^2)^{-\frac{2}{3}}, \quad v=y (x^2 + y^2)^{-\frac{2}{3}},
$$
into system \eqref{Eq3.1} yields
\begin{equation}\label{Eq3.20}
\begin{array}{ll}
\begin{array}{l}
\dot{u}= - v - \displaystyle\frac{1}{3}\, 
 a_{02} (u^4 + 10 u^2 v^2 - 3v^4) , \\[1.5ex]
\dot{v}= u + \displaystyle\frac{1}{3}\, 
a_{02} (5 u^2 - 7 v^2 )\, u v ,
\end{array}
& (v>0), \\[4.0ex]
\begin{array}{l}
\dot{u}=-v, \\[0.0ex]
\dot{v}=u,
\end{array}
& (v<0).
\end{array}
\end{equation}
The upper-half plane has an inverse integrating factor,
$$
V(u,v)=(u^2 + v^2)^2 f_7^{\frac{5}{6}},
$$
from which a first integral can be obtained as
$$
H(u,v)=\frac{u^2 + v^2}{f_7^{\frac{1}{6}} +16 a_{02} (u^2 + v^2)
\int u f_7^{-\frac{5}{6}}du},
$$
where
$$
f_7=9 \big[ 1 + 2 a_{02}\, v (u^2 - v^2) + a_{02}^2 v^2 (u^2 + v^2 )^2 \big] .
$$
Thus, this system has a transversal system,
\begin{equation}\label{Eq3.21}
\begin{array}{ll}
\begin{array}{l}
\dot{u}=u \big(3-9a_{02}^2v^3 \big)  
\displaystyle\frac{u^2+v^2}{f_7^{\frac{1}{6}}H(u,v)}, \\[0.5ex]
\dot{v}=v \big(3+9 a_{02}u^2v-3a_{02}v^3 \big)
\displaystyle\frac{u^2+v^2}{f_7^{\frac{1}{6}}H(u,v)},
\end{array}
& (v>0), \\[3.0ex]
\begin{array}{l}
\dot{u}=u, \\[0.5ex]
\dot{v}=v,
\end{array}
& (v<0),
\end{array}
\end{equation}
implying that infinity of system \eqref{Eq3.1} is an isochronous center.
\end{proof}

\section{Conclusion}

In this paper, quasi-analytic switching systems have been considered.
A modified and improved method for computing the return maps of
quasi-analytic switching systems is presented.
In particular, a quadratic quasi-analytic switching system is investigated
using this method. Center and isochronous center conditions are
explicitly derived. Compared to the special case $\lambda=1$ for which
five limit cycles are obtained around the origin \cite{GT2003},
we have shown that there exists an infinite number of solutions of 
$\lambda>0,\, \lambda \ne 1$
for the existence of six limit cycles around the 
origin, and two solutions for the existence of 
seven limit cycles for $\lambda = -\frac{16}{5}$,
which bifurcate in the small neighborhood of infinity of the system.
This shows that the dynamics of quasi-analytic switching systems
are more complex.

\vspace{0.20in} 
\noindent 
{\bf Acknowledgements} \ This research is partially supported by 
the National Nature Science
Foundation of China (NSFC No.~11371373 and No.~11601212)
and Applied Mathematics Enhancement Program of Linyi University,
and the Natural Science and Engineering Research Council of Canada
(NSERC No.~R2686A02).






\end{document}